\newtheorem{theorem}{Theorem}[section]
\newtheorem{proposition}[theorem]{Proposition}
\newtheorem{corollary}[theorem]{Corollary}
\newtheorem{lemma}[theorem]{Lemma}
\theoremstyle{remark}
\DeclareMathOperator{\supp}{supp\,}
\def\XXint#1#2#3{{\setbox0=\hbox{$#1{#2#3}{\int}$ }
\vcenter{\hbox{$#2#3$ }}\kern-.6\wd0}}
\begin{document}

\title[NLW with Partial Symmetry]{Global well-posedness for the logarithmically energy-supercritical Nonlinear Wave Equation with partial symmetry}
\author[A. Bulut]{Aynur Bulut}
\address{Department of Mathematics, Louisiana State University}
\email{aynurbulut@lsu.edu}
\author[B.Dodson]{Benjamin Dodson}
\address{Department of Mathematics, Johns Hopkins University}
\email{bdodson4@jhu.edu}
\date{July 9, 2018}

\begin{abstract}
We establish global well-posedness and scattering results for the logarithmically energy-supercritical nonlinear wave
equation, under the assumption that the initial data satisfies a partial symmetry condition.  These results generalize
and extend work of Tao in the radially symmetric setting.  The techniques involved include weighted versions of Morawetz 
and Strichartz estimates, with weights adapted to the partial symmetry assumptions.  

In an appendix, we establish a corresponding quantitative result for the energy-critical problem.
\end{abstract}

\maketitle

\section{Introduction}

The goal of this paper is to show how partial symmetry assumptions on initial data can lead to enhanced global well-posedness
results for nonlinear wave equations posed on Euclidean spaces $\mathbb{R}^d$, $d\geq 4$.  For the sake of simplicity, we
restrict our considerations to $\mathbb{R}^4$, however we expect that our results can be extended to higher dimensional settings
without much difficulty.  We focus our attention on the nonlinear wave equation,
\begin{align*}
\textrm{(NLW)}\quad \left\lbrace\begin{array}{c}u_{tt}-\Delta u+F(u)=0,\quad (t,x)\in \mathbb{R}\times\mathbb{R}^4\\
(u,u_t)|_{t=0}=(u_0,u_1),
\end{array}\right.
\end{align*}
with defocusing nonlinearity $F$ in the {\it slightly energy-supercritical} regime,
\begin{align}
F:u\mapsto u^3\log(3+u^2).\label{3.1}
\end{align}
To explain this terminology, we recall that when $F$ is a defocusing power-type nonlinearity given by $F(u)=|u|^pu$, the problem is {\it energy-subcritical} for $p<2$ and {\it energy-critical} for $p=2$; in these two cases, solutions (starting, e.g. from sufficiently regular and decaying initial data) are known to be globally well-posed, with a corresponding scattering result, while when $p>2$ the problem is {\it energy-supercritical}, and the long-time behavior of solutions remains a prominent open question (see \cite{KM1,KM2,KV1,KV2,B1,B2,B3,DL,DR}, and references cited in these works, for results showing that {\it a priori} control over a critical norm implies global well-posedness, as well as \cite{TaoSC} where energy-supercritical blowup is shown to be possible for certain {\it systems} of defocusing nonlinear wave equations; see also \cite{BeSo,BeSo2,DKM,DS,KrSch} for other results in energy-supercritical settings).
\setlength{\parskip}{0.3\baselineskip}

Here, a key ingredient in long-time control over solutions is the {\it energy} associated to (NLW),
\begin{align*}
E[u,u_t]=\frac{1}{2}\int_{\mathbb{R}^4} |\nabla u|^2+|u_t|^2dx+\int_{\mathbb{R}^4} G(u)dx,
\end{align*}
where $G$ is given by $$G(u)=\int_0^u F(t)dt,$$ and which is conserved in time for solutions of (NLW).  When $F$ is given by ($\ref{3.1}$), one has $G(u)\approx u^4\log(3+u^2)$, while when $F$ is of power-type $|u|^pu$, $G(u)=\frac{1}{p+2}|u|^{p+2}$.

In recent years, beginning with work of Tao \cite{Tao}, several authors have studied the global well-posedness and scattering problem for various cases of the three-dimensional nonlinear wave equation with slightly energy-supercritical nonlinearity, obtaining striking results which show that the global well-posedness theory can be extended from the energy-subcritical and energy-critical settings into the slightly energy-supercritical regime (that is, admitting the inclusion of logarithmic factors).  While in the discussion that follows we focus on (NLW), we make note of the works \cite{R1,R2,R3} on the nonlinear Schr\"odinger equation, and \cite{TaoNS,KT,BMR} on variants of the Navier-Stokes system.

In \cite{Tao}, Tao established global well-posedness for (NLW) posed on $\mathbb{R}^3$ with $F(u)=u^5\log(3+u^2)$, under the assumption that the initial data $(u_0,u_1)$ has radial symmetry (note that in this three-dimensional setting, the energy-critical power-type nonlinearity is $F(u)=u^5$).  The technique in \cite{Tao} is based on a quantitative scattering bound obtained by Ginibre, Soffer, and Velo in \cite{GSV}, valid for radially symmetric solutions, which relies on a combination of the Morawetz estimate with decay properties guaranteed by the radial case of the Sobolev embedding.  See also work of Shih \cite{Shih}, where the method is refined to treat $F(u)=u^5\log^{c}(3+u^2)$, $0<c\leq 4/3$.

The nonradial case was studied by Roy \cite{RoyNLW}.  The result in \cite{RoyNLW} (see also \cite{Roy-NLW-supercritical}) is based on a {\it nonradial} quantitative form \cite{Tao-quant} of the energy-critical global well-posedness result for three-dimensional NLW (closely related to the induction on energy technique of Bourgain \cite{Bourgain}, originally developed in the nonlinear Schr\"odinger setting).  Whereas the radial energy-critical bound of \cite{GSV} gives control which is polynomial in the energy, the nonradial result of \cite{Tao-quant} gives an estimate which exhibits double-exponential dependence on the energy.  As a consequence, the results in \cite{RoyNLW} apply to nonlinearities of ``log-log supercritical'' type, $F(u)=u^5\log^c(\log(3+u^2))$ for $c$ sufficiently small (in particular, \cite{RoyNLW} treats the case $c\in (0,8/225)$), while ``log-supercritical'' nonlinearities of type (\ref{3.1}) have so far remained out of reach in the non-radial case.

In the present paper, we consider (NLW) on $\mathbb{R}^4$, and show that {\it partial} symmetry assumptions can lead to results which apply to logarithmically supercritical nonlinearities, giving an improvement over general non-radial methods as used in \cite{RoyNLW}.  For comparison, we begin with the $\mathbb{R}^4$ analog of Tao's three-dimensional log-energy-supercritical result.  To fix notation, for $s>1$ let $$\tilde{H}^s(\mathbb{R}^4):=\dot{H}^s(\mathbb{R}^4)\cap \dot{H}^1(\mathbb{R}^4).$$

\begin{theorem}[Radial log-supercritical NLW on $\mathbb{R}^4$]
\label{p.1}
Suppose that $u:I\times\mathbb{R}^4\rightarrow\mathbb{R}$ is a solution to (NLW) with nonlinearity (\ref{3.1}) and radially symmetric initial data $(u_0,u_1)\in \tilde{H}^{9/4}(\mathbb{R}^4)\times H^{5/4}(\mathbb{R}^4)$.  Then 
\begin{align*}
\lVert (u,u_t)\rVert_{L_t^\infty(\tilde{H}_x^{9/4}\times H_x^{5/4})}\lesssim 1,
\end{align*}
with constant independent of the time interval $I$.  In particular, combining this estimate with the usual local theory for (NLW), radial solutions to (NLW) with nonlinearity (\ref{3.1}) exist globally in time and scatter at $\pm\infty$.
\end{theorem}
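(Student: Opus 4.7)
The plan is to follow the scheme introduced by Tao~\cite{Tao} for the log-supercritical NLW, adapted to the four-dimensional setting. The proof proceeds in three stages: (i) low-regularity a priori control from energy conservation and a radial Morawetz estimate, (ii) a quantitative spacetime bound on $\|u\|_{L^\infty_x}$ obtained by combining (i) with the radial Sobolev embedding, and (iii) persistence of the higher $\tilde{H}^{9/4}\times H^{5/4}$ regularity via Strichartz estimates and a Moser-type fractional estimate for the logarithmic nonlinearity.

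For stage (i), since $G(u)\approx u^4\log(3+u^2)\geq 0$ is defocusing, conservation of energy immediately gives $\|(u,u_t)\|_{L^\infty_t(\dot{H}^1\times L^2)}\lesssim E[u_0,u_1]^{1/2}$, which is bounded in terms of the initial data. The standard Morawetz identity with multiplier adapted to radial data in four space dimensions then produces a spacetime bound of the form
\begin{equation*}
\int_I\!\!\int_{\mathbb{R}^4}\frac{|u|^4\log(3+u^2)}{|x|}\,dx\,dt\lesssim 1,
\end{equation*}
uniformly in the time interval $I$.

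For stage (ii), the radial Sobolev inequality on $\mathbb{R}^4$ applied to $\dot{H}^1$ functions yields the pointwise decay $|u(t,x)|\lesssim |x|^{-1}\|u(t)\|_{\dot{H}^1}\lesssim |x|^{-1}$. Interpolating this decay against the weighted $L^4_{t,x}$-control from the Morawetz bound, via a dyadic decomposition of space into annuli $|x|\sim 2^j$, produces a quantitative $L^q_tL^\infty_x$-type norm on $u$ with at most a logarithmic loss; this is the four-dimensional analogue of the Ginibre--Soffer--Velo type spacetime bound driving Tao's three-dimensional argument. For stage (iii), Strichartz estimates for the linear wave equation together with a Moser-type estimate for $F(u)=u^3\log(3+u^2)$ (using $F'(u)\lesssim u^2\log(3+u^2)$) furnish, on any subinterval $J\subset I$,
\begin{equation*}
\|F(u)\|_{L^1_t H^{5/4}_x(J\times\mathbb{R}^4)}\lesssim \|u\|_{L^2_t L^\infty_x(J)}^2\log\!\bigl(3+\|u\|_{L^\infty(J\times\mathbb{R}^4)}^2\bigr)\|u\|_{L^\infty_t H^{5/4}_x(J)}.
\end{equation*}
Splitting $I$ into finitely many subintervals on which the $L^2_tL^\infty_x$ norm from stage (ii) is small, applying this estimate on each, and summing through a Gronwall/bootstrap argument, yields the desired uniform bound on $\|(u,u_t)\|_{L^\infty_t(\tilde{H}^{9/4}\times H^{5/4})}$; scattering then follows by the usual consequence from the resulting global Strichartz norms.

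The main obstacle is in stage (iii), where one must carefully balance the logarithmic loss from (ii) against the logarithmic factor in the nonlinearity while counting the number of subintervals needed for the iteration, so that neither the Gronwall exponent nor the running constant blows up as $|I|\to\infty$. The logarithmically energy-supercritical structure of $F$ is essential here: a nonlinearity growing any faster above the energy-critical model $u^3$ would destroy this balance and prevent the iteration from closing. A secondary technical issue is the bookkeeping in stage (ii), where the dyadic decomposition must be arranged so that the $L^\infty_x$ control inherited from the radial Sobolev decay $|x|^{-1}$ extends into the interior regions $|x|\lesssim 1$ using only the Morawetz bound together with the energy.
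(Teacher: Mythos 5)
Your stages (i) and the overall bootstrap strategy of stage (iii) are in the right spirit, but stage (ii) contains a genuine gap that then undermines stage (iii). The Ginibre--Soffer--Velo combination of the Morawetz estimate with the radial Sobolev embedding does \emph{not} produce an $L^q_t L^\infty_x$ bound on $u$ for any finite $q$. On an unbounded time interval there is simply no a priori temporal decay available: the radial decay $|u(t,x)|\lesssim |x|^{-1}\|u(t)\|_{\dot H^1}$ controls $L^\infty_x$ only on exterior regions $|x|\gtrsim R$, and pushing into the interior requires the higher Sobolev norm (the bootstrap quantity itself), so the $L^q_t$ integral is not finite independently of $|I|$. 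What the Morawetz--plus--radial--Sobolev argument actually yields, both in GSV's three-dimensional work and in the present four-dimensional setting, is a \emph{spacetime Lebesgue} bound: substituting $|u|\lesssim |x|^{-1}$ for the Morawetz weight $|x|^{-1}$ gives
\begin{align*}
\int_I\int_{\mathbb{R}^4}|u|^5\log(3+u^2)\,dx\,dt\lesssim E[u_0,u_1]^{3/2},
\end{align*}
i.e.\ a weighted $L^5_{t,x}$ bound, not an $L^q_t L^\infty_x$ bound.

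This matters for stage (iii): your proposed Moser-type estimate carries its smallness through the factor $\|u\|_{L^2_t L^\infty_x}^2$, which you cannot produce. The paper's argument instead runs the Strichartz iteration with H\"older arranged so that the smallness factor is $A(t_0,t_1)^{1/5}$ (respectively $A^{2/5}$), where $A$ is the above $L^5_{t,x}$-type quantity, while the $L^\infty_x$ dependence is placed entirely inside a logarithm, $\log(3+\|u\|_{L^\infty_{t,x}}^2)$, which only requires sup-boundedness (from $\dot H^{9/4}\cap\dot H^1\hookrightarrow L^\infty$), not time integrability. The remaining derivative factors are distributed among the mixed Strichartz norms $L^{20/7}_tL^{10}_x$ and $L^{20/3}_tL^{5/2}_x$ rather than $L^\infty_t H^{5/4}_x$. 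Once the smallness condition is expressed in terms of $A(t_0,t_1)\leq \epsilon/\log^4(3+E+Z(t_0)^2)$ (with the $\log^4$ coming from the $1/5$ and $4/5$ exponents), the partitioning/Gronwall step closes exactly because $A$ is a priori finite from Morawetz. To repair your proof you would need to replace the $L^q_tL^\infty_x$-based stage (ii)/(iii) with this spacetime-$L^5$ bookkeeping; the conceptual skeleton (Morawetz $\Rightarrow$ small quantity on subintervals $\Rightarrow$ iterated Strichartz) is otherwise the same as the paper's.
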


For the convenience of the reader, we sketch a proof of Theorem $\ref{p.1}$ in Section $\ref{s.prop1}$ below.

Our main result is the following theorem, which shows that this global well-posedness property persists when the assumption of radial initial data is relaxed to the assumption that $u_0$ and $u_1$ obey a symmetry condition of the form
\begin{align}
u(x)=\tilde{u}(|x'|,x_4),\quad x\in\mathbb{R}^4,\label{symm1}
\end{align}
with $\tilde{u}:\mathbb{R}_+\times\mathbb{R}\rightarrow\mathbb{R}$, where $x'=(x_1,x_2,x_3)$ for $x=(x_1,\ldots,x_4)\in\mathbb{R}^4$.

\begin{theorem}[Axially symmetric log-supercritical NLW on $\mathbb{R}^4$]
\label{t.1}
Suppose that $u:I\times\mathbb{R}^4\rightarrow\mathbb{R}$ is a solution to (NLW) with nonlinearity (\ref{3.1}) and initial data $(u_0,u_1)\in \tilde{H}^{5/2}(\mathbb{R}^4)\times H^{3/2}(\mathbb{R}^4)$ with each of $u_0$ and $u_1$ satisfying the symmetry condition (\ref{symm1}).  Then 
\begin{align*}
\lVert (u,u_t)\rVert_{L_t^\infty(\tilde{H}_x^{5/2}\times H_x^{3/2})}\lesssim 1,
\end{align*}
with constant independent of the time interval $I$.  In particular, solutions to (NLW) with nonlinearity (\ref{3.1}) and initial data having axial symmetry of the form (\ref{symm1}) exist globally in time and scatter at $\pm\infty$.
\end{theorem}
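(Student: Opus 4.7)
The approach is to mirror the proof of Theorem \ref{p.1}, replacing each use of full radial symmetry by an axial analog tailored to the symmetry (\ref{symm1}). Recall that in the radial setting of Theorem \ref{p.1} the two essential ingredients are (i) the Morawetz inequality $\iint |x|^{-1}F(u)u\, dx\,dt\lesssim E[u_0,u_1]$ and (ii) a Strauss-type pointwise bound yielding decay $|u|\lesssim |x|^{-1}N(t)^{1/2}$, where $N(t)$ involves the higher Sobolev norm. Together with Duhamel and Strichartz, these close a Gronwall-type inequality for $N(t)=\|(u,u_t)(t)\|_{\tilde H^{9/4}\times H^{5/4}}$ in which the logarithmic loss $\log(3+u^2)$ becomes a logarithm in $N(t)$ and integrates to a uniform bound.

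The first step is to derive a weighted Morawetz inequality with weight $|x'|^{-1}$ in place of $|x|^{-1}$. I would use the multiplier $a(x)\cdot\nabla u$ with $a(x)=(x'/|x'|,0)\in\mathbb R^4$; since $a$ depends only on the three-dimensional variable $x'$ and has vanishing fourth component, the cross-terms involving $\partial_{x_4}u$ produced by the multiplier either vanish or carry a favorable sign by virtue of the axial symmetry (\ref{symm1}). The resulting spacetime estimate reads
\begin{equation*}
\iint_{\mathbb R\times\mathbb R^4}\frac{F(u)u}{|x'|}\, dx\,dt\,\lesssim\, E[u_0,u_1],
\end{equation*}
which is the natural three-dimensional Morawetz identity in the $x'$ direction lifted to $\mathbb R^4$.

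The second step is a pointwise decay estimate coming from the axial symmetry. For each fixed $(t,x_4)$ the slice $x'\mapsto u(x',x_4,t)$ is radial on $\mathbb R^3$, so the classical three-dimensional Strauss lemma gives $|u(x',x_4,t)|\lesssim|x'|^{-1}\bigl(\|u(\cdot,x_4,t)\|_{\dot H^1_{x'}}\|u(\cdot,x_4,t)\|_{\dot H^2_{x'}}\bigr)^{1/2}$. The regularity $\tilde H^{5/2}\times H^{3/2}$ is chosen precisely so that a Sobolev trace-type embedding in $x_4$ transfers this into the uniform bound $|u(x,t)|\lesssim|x'|^{-1}N(t)^{1/2}$ with $N(t)=\|(u,u_t)(t)\|_{\tilde H^{5/2}\times H^{3/2}}$, and hence $\log(3+u^2)\lesssim\log(3+|x'|^{-2}N(t))$ in the exterior region $|x'|\gtrsim 1$.

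With these two inputs in place, I would close the bootstrap on $N(t)$ through Duhamel together with weighted Strichartz estimates adapted to the axial symmetry (providing an analog for the axial group of the standard radial Strichartz improvement). Combining them with the pointwise decay and the weighted Morawetz produces an inequality of Gronwall form $N(t)^2\lesssim N(0)^2+\int_0^t\log(C+N(s))\,d\mu(s)$ for a finite Morawetz-type measure $d\mu$; its logarithmic growth integrates to a uniform bound on any time interval, proving the theorem and its scattering consequence. I expect the principal obstacle to be the near-axis region $\{|x'|\lesssim 1\}$, where the Morawetz weight and the partial radial Sobolev decay both degenerate and where there is no spatial decay at all in $x_4$; the weighted Strichartz estimates must be strong enough to supply integrability in $x_4$ and smoothing near the axis to absorb the logarithmic nonlinear loss. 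Constructing and exploiting these anisotropic Strichartz bounds is the technical heart of the argument.
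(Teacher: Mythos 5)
Your overall skeleton---a weighted Morawetz estimate with $|x'|^{-1}$ weight, weighted Strichartz estimates adapted to the axial symmetry, and a bootstrap/partitioning argument---does capture the structure of the paper's proof, and your proposed Morawetz multiplier $a(x)=(x'/|x'|,0)$ matches the paper's Proposition \ref{p.morawetz} exactly. However, there are two substantial issues.

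First, your step (ii) is not what the paper does, and it introduces a circularity that you have not addressed. The paper's proof of Theorem \ref{t.1} makes no use of a pointwise Strauss-type decay $|u(x,t)|\lesssim |x'|^{-1}N(t)^{1/2}$; the logarithmic factor is handled simply via $\|\log(3+u^2)^{1/2}\|_{L^\infty}\lesssim \log(3+Z(t)^2+E)^{1/2}$, coming from $H^{5/2}(\mathbb{R}^4)\hookrightarrow L^\infty$, with no weight. The weighted Morawetz quantity $\tilde A(t_0,t_1)=\iint |x'|^{-1}u^4\log(3+u^2)$ is kept in weighted form and paired against the weighted Strichartz norm $\||x'|^{1/4}D^2u\|_{L^4_{t,x}}$, so that the smallness condition and the partition of $I$ depend only on the conserved energy. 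Your scheme, by contrast, converts the Morawetz to a decay estimate $|u|\lesssim |x'|^{-1}N(t)^{1/2}$ whose constant is the \emph{uncontrolled} higher Sobolev norm $N(t)$, not the energy. Thus the resulting ``measure'' $d\mu$ in your Gronwall inequality is not a fixed finite measure: it depends on $N(t)$, and the argument does not close in the way you describe. Note that the partial radial Sobolev bound the paper does establish (Lemma \ref{l.wb} in the appendix) is only an $L^5_x$ bound controlled by $\|\nabla u\|_{L^2}$, not a pointwise $L^\infty$ bound; the jump to $L^\infty$ control uniformly in $x_4$ is exactly what forces the unwanted $N(t)$-dependence.

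Second, and more fundamentally, you correctly identify the weighted Strichartz estimates as ``the technical heart of the argument,'' but you give no indication of how to establish them, and this is precisely where all the novelty in the paper lies. Lemma \ref{l2.1} is proved by a nontrivial combination of a dyadic decomposition in both the physical variable $r=|(x_1,x_2)|$ and the dual frequency variable, stationary-phase analysis of the angular Fourier integral, a change of variables $r\mapsto r/\cos\theta$ to reduce the weighted $L^4_{t,x}(\mathbb{R}^{1+4})$ estimate to a three-dimensional Strichartz bound, and Young's inequality to sum the dyadic pieces. Passing from the symmetry (\ref{symm3}) to (\ref{symm1}) then requires the averaging trick of Lemma \ref{l1.3} (summing over the three coordinate pairs). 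Without this construction, your plan remains a heuristic outline rather than a proof, and its central estimate is missing.
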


The proof of Theorem $\ref{t.1}$ is based on a bootstrap procedure and continuity argument.  The key long-time estimates are provided by a variant of the Morawetz estimate adapted to weights in the symmetric variables, complemented with a class of weighted Strichartz estimates.  These weighted estimates of Morawetz and Strichartz type are the new ingredients which allow us to fully exploit the anisotropic decay satisfied by solutions in our setting (that is, decay arising from the symmetry assumption ($\ref{symm1}$)).

We remark that our techniques can be extended to other partially symmetric settings.  As an example, we give a related global well-posedness result when the initial data $u_0$ and $u_1$ has product-type symmetry
\begin{align}
u(x)=\tilde{u}(|(x_1,x_2)|,|(x_3,x_4)|),\quad x=(x_1,\ldots,x_4)\in\mathbb{R}^4,\label{symm2}
\end{align}
with $\tilde{u}:\mathbb{R}^2_+\rightarrow\mathbb{R}$.

\begin{theorem}
\label{t.2}
Suppose that $u:I\times\mathbb{R}^4\rightarrow\mathbb{R}$ is a solution to (NLW) with nonlinearity (\ref{3.1}) and initial data $(u_0,u_1)\in \tilde{H}^{5/2}(\mathbb{R}^4)\times H^{3/2}(\mathbb{R}^4)$ with each of $u_0$ and $u_1$ satisfying the symmetry condition (\ref{symm2}).  Then 
\begin{align*}
\lVert (u,u_t)\rVert_{L_t^\infty(\tilde{H}_x^{5/2}\times H_x^{3/2})}\lesssim 1,
\end{align*}
with constant independent of the time interval $I$.  In particular, solutions to (NLW) with nonlinearity (\ref{3.1}) and initial data having symmetry of the form (\ref{symm2}) exist globally in time and scatter at $\pm\infty$.
\end{theorem}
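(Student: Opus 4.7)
The proof proceeds along the same general lines as Theorem \ref{t.1}: a bootstrap/continuity argument on the high-regularity norm $M(I):=\lVert(u,u_t)\rVert_{L_t^\infty(I;\tilde H_x^{5/2}\times H_x^{3/2})}$, driven by two families of long-time estimates, a Morawetz-type inequality and a class of Strichartz estimates, both equipped with weights that match the symmetry of $u$. The only change is that the axial variables $(|x'|,x_4)\in\mathbb{R}_+\times\mathbb{R}$ must be replaced throughout by the bi-radial variables $(r_1,r_2):=(|(x_1,x_2)|,|(x_3,x_4)|)\in\mathbb{R}_+^2$, and the weights and multipliers rebuilt accordingly. A crucial preliminary is that the symmetry (\ref{symm2}) is preserved by the NLW flow with nonlinearity (\ref{3.1}), so it suffices to work within this symmetry class throughout the bootstrap.

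First I would record the analog of the radial/axial Sobolev embedding: writing $u(x)=\tilde u(r_1,r_2)$ and decomposing $dx=(2\pi)^2 r_1r_2\,dr_1dr_2\,d\theta_1d\theta_2$, standard weighted one-dimensional embeddings in each $r_i$ (combined with the $\dot H^1$ control from the energy and the $\tilde H^{5/2}$ control from the bootstrap) yield pointwise decay of the form $(r_1r_2)^{\alpha}|u(x)|\lesssim C(E[u,u_t],M(I))$ on $\{r_1,r_2\gtrsim 1\}$ for some $\alpha>0$, up to logarithmic losses that are acceptable for a log-supercritical nonlinearity. This plays the role that 3D-radial Sobolev played in Theorem \ref{p.1} and that the axial Sobolev embedding played in Theorem \ref{t.1}.

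Next I would derive the bi-radial weighted Morawetz estimate: choose a multiplier of the form $X=a(r_1,r_2)\partial_{r_1}+b(r_1,r_2)\partial_{r_2}+\text{lower order}$, with $a,b$ nonnegative and adapted to the product symmetry (e.g.\ built from $(r_1r_2)^{-\gamma}$ or a smoothed variant), and pair it with $u_t$ in the equation. The resulting identity gives a spacetime integral of $|u|^4\log(3+u^2)$ against a product weight, controlled by energy times a power of the weight, which is the long-time input for the bootstrap. The complementary weighted Strichartz estimates follow from the standard Strichartz inequalities combined with the bi-radial decay from the previous step, exactly as in Theorem \ref{t.1} but now with product weights $(r_1r_2)^{\beta}$ in place of powers of $|x'|$. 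The logarithmic factor in $F$ is then absorbed as in \cite{Tao}: the $L_{t,x}^\infty$ growth of $u$ coming from $M(I)$ is polynomial in $M(I)$, so $\log(3+u^2)$ is only logarithmic in $M(I)$ and can be reabsorbed by choosing the bootstrap constant appropriately.

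The main obstacle I expect is the construction of the Morawetz multiplier. Each factor is only $2$-dimensional radial, which is weaker than the $3$-dimensional radial setting underlying the axial case (\ref{symm1}), so the natural weight $(r_1r_2)^{-\gamma}$ is singular along the two coordinate hyperplanes $\{r_1=0\}$ and $\{r_2=0\}$; it takes care to choose $a,b$ and a truncation so that boundary/cross terms are nonnegative and the positive bulk term genuinely dominates the drift coming from the potential $G(u)\approx u^4\log(3+u^2)$. Once this multiplier is in hand, the closing of the bootstrap mirrors the proof of Theorem \ref{t.1} essentially verbatim, with $(r_1,r_2)$-weights replacing $(|x'|,x_4)$-weights throughout.
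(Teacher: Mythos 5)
Your proposal correctly identifies the overall strategy (a bootstrap on the Strichartz norm driven by a long-time Morawetz input plus a weighted Strichartz estimate), and you rightly flag the central difficulty: each factor in (\ref{symm2}) is only $2$-dimensional radial, so a Morawetz multiplier built from weights like $(r_1r_2)^{-\gamma}$ runs into the failure of the two-dimensional Hardy inequality and the wrong sign of $\Delta_{\mathbb{R}^2}(1/r)$. However, you stop at naming this obstacle rather than resolving it, and the route you gesture at — constructing a genuinely bi-radial multiplier with a \emph{product} weight — is precisely the one the paper does \emph{not} take, and it is unlikely to close as written.

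The paper sidesteps the obstacle entirely by not constructing a new multiplier at all. The \emph{standard} Morawetz estimate on $\mathbb{R}^4$ already gives
\begin{align*}
\iint_{I\times\mathbb{R}^4}\frac{u^4\log(3+u^2)}{|x|}\,dx\,dt\lesssim E[u_0,u_1],
\end{align*}
and since $|x|\leq |(x_1,x_2)|+|(x_3,x_4)|$, the same quantity is controlled with the \emph{sum} weight $\big(|(x_1,x_2)|+|(x_3,x_4)|\big)^{-1}$ in place of a product weight. No new multiplier, no 2D Hardy inequality, no singularity along coordinate hyperplanes. Correspondingly, the weighted Strichartz estimate used is not a product-weight bound but the sum-weight bound obtained by applying Lemma \ref{l2.1} twice — once to the $(x_1,x_2)$ pair and once to the $(x_3,x_4)$ pair, which is legitimate since (\ref{symm2}) is a stronger symmetry than (\ref{symm3}) in either pair — and adding, giving control of $\|\big(|(x_1,x_2)|+|(x_3,x_4)|\big)^{1/4}D^2u\|_{L^4_{t,x}}$. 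These two sum-weighted estimates then feed the bootstrap exactly as in the proof of Theorem \ref{t.1}. As written, your proposal sets up a product-weight Morawetz that you yourself suspect cannot be made sign-definite, and a mismatched product-weight Strichartz estimate; the missing idea is simply to replace $(r_1r_2)$-type weights by the sum weight $|(x_1,x_2)|+|(x_3,x_4)|\sim |x|$ throughout, at which point the standard Morawetz estimate and Lemma \ref{l2.1} already do all the work and no new geometric construction is needed.
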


As we alluded to in our discussion of the works of Tao \cite{Tao} and Roy \cite{RoyNLW} above, there is a close connection between global well-posedness results for slightly energy-supercritical (NLW) and quantitative estimates for the energy-critical problem.  This viewpoint carries over to our partially symmetric setting, and we include one such result in Appendix A.  In particular, for solutions to energy-critical (NLW) satisfying the symmetry condition (\ref{symm1}), we present a global well-posedness and scattering result with bounds that have {\it polynomial} dependence on the energy.

To the best of our knowledge, Theorem \ref{t.1} and Theorem \ref{t.2} (as well as the energy-critical results discussed in the appendix) are among the first instances where partial symmetry assumptions are exploited to improve global well-posedness results for (NLW); in this context, we note also the works of Martel \cite{Martel} which uses a similar philosophy to establish blow-up results for the nonlinear Schr\"odinger equation, as well as Liu--Wang \cite{LW} on axisymmetric Navier-Stokes.  We expect that this philosophy can be broadly applied in settings where radial symmetry arises as a useful hypothesis.  We plan to revisit the question of extending these results to a three-dimensional setting in a future work.

\subsection*{Outline}

We now describe the structure of the rest of this article.  In Section $2$, we prove the radial $\mathbb{R}^4$ result, Theorem $\ref{p.1}$, with an argument (as in \cite{Tao}) based on Strichartz estimates, the Morawetz estimate, and the radial Sobolev embedding.  The subsequent Sections $3$--$5$ deal with our main results concerning global well-posedness under partial symmetry assumptions.  In Section $3$ we establish a class of weighted Strichartz estimates for solutions to (NLW) under the symmetry assumptions ($\ref{symm1}$) and ($\ref{symm2}$).  These are used in Section $4$ to prove Theorem $\ref{t.1}$, where they are combined with a suitable form of the Morawetz estimate adapted to the symmetry condition ($\ref{symm1}$), and, similarly in Section $5$ to prove Theorem $\ref{t.2}$.  In the appendix, we establish an associated result for the energy-critical problem.

\subsection*{Acknowledgements}

The work of A.B. was partially supported by NSF Grants DMS-1361838 and DMS-1748083.  The work of B.D. was partially supported by NSF Grant DMS-1500424.  Both authors are grateful to A. Soffer for enlightening discussions and comments on an earlier draft.

\section{Proof of Theorem \ref{p.1}}
\label{s.prop1}

In this section, we prove the radial global well-posedness result, Theorem $\ref{p.1}$.  The argument is largely similar to the
three-dimensional case of \cite{Tao} and \cite{Shih}, and is based on Strichartz estimates, the Morawetz estimate for (NLW), and 
the radial Sobolev embedding.  

We begin by specifying a few notational conventions: we write $A\lesssim B$ to mean $A\leq CB$ for 
some constant $C>0$, and allow for the constants $C$ to change from line to line unless otherwise indicated.  For $s\geq 0$, we 
use $H_x^s$ and $\dot{H}_x^s$ to refer to the usual ($L^2$-based) inhomogeneous and homogeneous Sobolev spaces, respectively.  In
addition, we use subscripts on the spaces $L^p$, $H^s$ and $\dot{H}^s$ to indicate the variables of integration for the appropriate
norm.

\subsection{Strichartz estimates for (NLW)}

We recall the usual Strichartz estimates for solutions to the inhomogeneous wave equation (NLW) on $\mathbb{R}^4$.  These estimates read
\begin{align*}
\lVert |\nabla|^s u\rVert_{L_t^qL_x^r}\lesssim \lVert (u(0),u_t(0))\rVert_{\dot{H}_x^\mu\times \dot{H}_x^{\mu-1}}+\lVert |\nabla|^{\tilde{s}}F\rVert_{L_t^{a'}L_x^{b'}},
\end{align*}
where $(q,r),(a,b)\in [2,\infty]\times [2,\infty)$ satisfy
\begin{align*}
\frac{1}{q}+\frac{4}{r}=2-(\mu-s),\quad \frac{1}{q}+\frac{3}{2r}\leq \frac{3}{4},
\end{align*}
\begin{align*}
\frac{1}{a}+\frac{4}{b}=2-(1+\tilde{s}-\mu),\quad \frac{1}{a}+\frac{3}{2b}\leq \frac{3}{4},
\end{align*}
\begin{align*}
\frac{1}{a}+\frac{1}{a'}=1,\quad \frac{1}{b}+\frac{1}{b'}=1.
\end{align*}

For the convenience of the reader, we record the particular instances of these estimates that we use below.  In section 2, we use
\begin{align*}
\lVert (u,u_t)\rVert_{L_t^\infty(I;\dot{H}_x^{9/4}\times \dot{H}_x^{5/4})}&\lesssim \lVert (u(t_0),u_t(t_0))\rVert_{\dot{H}_x^{9/4}\times \dot{H}_x^{5/4}}+\lVert D^2F\rVert_{L_t^{20/11}L_x^{5/4}},\\
\lVert \nabla u\rVert_{L_t^{20/7}L_x^{10}}&\lesssim \lVert (u(t_0),u_t(t_0)\rVert_{\dot{H}_x^{9/4}\times \dot{H}_x^{5/4}}+\lVert D^2F\rVert_{L_{t}^{20/11}L_x^{5/4}},\\
\lVert D^2u\rVert_{L_t^{20/3}L_x^{5/2}}&\lesssim \lVert (u(t_0),u_t(t_0)\rVert_{\dot{H}_x^{9/4}\times \dot{H}_x^{5/4}}+\lVert D^2F\rVert_{L_t^{20/11}L_x^{5/4}}.
\end{align*}

\subsection{Morawetz estimate and the radial Sobolev embedding}

The usual Morawetz estimate for (NLW) (on $\mathbb{R}^4$) with nonlinearity given by ($\ref{3.1}$) is
\begin{align*}
\int_I\int_{\mathbb{R}^4} \frac{u(t,x)^4\log(3+u(t,x)^2)}{|x|}dxdt\leq CE[u_0,u_1]
\end{align*}
for some constant $C>0$ independent of $u$, where $u:I\times\mathbb{R}^4\rightarrow \mathbb{R}$ is a solution of (NLW).  Combining this with the radial Sobolev bound 
\begin{align}
\lVert |x|u(t,x)\rVert_{L_{t,x}^\infty}\lesssim \lVert u\rVert_{\dot{H}_x^1(\mathbb{R}^4)}\label{radSobolev}
\end{align}
(see, for instance, \cite{Ni}), one obtains
\begin{align*}
\int_I\int_{\mathbb{R}^4} |u(t,x)|^5\log(3+u(t,x)^2)dxdt\leq CE[u_0,u_1]^{3/2}.
\end{align*}

\subsection{Sketch of the Proof of Theorem \ref{p.1}}

As in \cite{Tao}, the argument used to prove Theorem $\ref{p.1}$ is based on an iterative application of Strichartz estimates.  We sketch the key estimate in this section, a conditional bound on the Strichartz norm for short-time intervals, which is then used inductively to establish the result.  We postpone the details of this inductive argument to Section $5$, where it is performed in the setting of the proof of Theorem $\ref{t.1}$.

Fix an interval $I=[t_0,t_1]\subset I_{\textrm{max}}$, where $I_{\textrm{max}}$ denotes the maximal interval of existence, and define
\begin{align*}
A(t_0,t_1)&:=\int_{t_0}^{t_1}\int_{\mathbb{R}^4} |u(t,x)|^5\log(3+u(t,x)^2)dxdt,
\end{align*}
along with
\begin{align*}
Z(t)&:=\lVert \nabla u\rVert_{L_t^{20/7}([t_0,t];L_x^{10})}+\lVert D^2u\rVert_{L_t^{20/3}([t_0,t];L_x^{5/2})}\\
&\hspace{0.8in}+\sup_{t_0\leq t'\leq t} \lVert (u(t'),u_t(t'))\rVert_{\dot{H}^{9/4}_x(\mathbb{R}^4)\times \dot{H}_x^{5/4}(\mathbb{R}^4)}, \quad t\in I.
\end{align*}

Let $t\in I$ be given.  By Strichartz,
\begin{align}
\nonumber Z(t)&\lesssim \lVert u(t_0)\rVert_{\dot{H}^{9/4}_x(\mathbb{R}^4)}+\lVert u_t(t_0)\rVert_{\dot{H}^{5/4}_x(\mathbb{R}^4)}\\
\nonumber &\hspace{0.8in}+\lVert D_{x}^2[u^3\log(3+u^2)]\rVert_{L_t^{20/11}L_x^{5/4}}\\
\nonumber &\lesssim Z(t_0)+\lVert u|\nabla_x u|^2\log(3+u^2)\rVert_{L_t^{20/11}L_x^{5/4}}\\
&\hspace{0.8in}+\lVert u^2(D^2u)\log(3+u^2)\rVert_{L_t^{20/11}L_x^{5/4}},\label{7.1}
\end{align}
where we have used the bounds $u^3|\nabla u|^2/(3+u^2)\lesssim u|\nabla u|^2\lesssim u|\nabla u|^2\log(3+u^2)$ and $u^5|\nabla u|^2/(3+u^2)^2\lesssim u|\nabla u|^2\lesssim u|\nabla u|^2\log(3+u^2)$.

Using the H\"older inequality, the right-hand side of (\ref{7.1}) is bounded by a multiple of
\begin{align*}
&Z(t_0)+\lVert \nabla u\rVert_{L_t^\infty L_x^2}\lVert \nabla u\rVert_{L_t^{20/7}L_x^{10}}\lVert u\log^{1/5}(3+u^2)\rVert_{L_{t,x}^5}\log^{4/5}(3+\lVert u\rVert_{L_{t,x}^{\infty}}^2)\\
&\hspace{0.6in}+\lVert D^2u\rVert_{L_t^{20/3}L_x^{5/2}}\lVert u^2\log^{2/5}(3+u^2)\rVert_{L_{t,x}^{5/2}}\log^{3/5}(3+\lVert u\rVert_{L_{t,x}^{\infty}}^2)\\
&\hspace{0.2in} \lesssim Z(t_0)+E[u_0,u_1]^{1/2}Z(t)A(t_0,t_1)^{1/5}\log^{4/5}(3+E[u_0,u_1]+Z(t)^2)\\
&\hspace{0.8in}+Z(t)A(t_0,t_1)^{2/5}\log^{3/5}(3+E[u_0,u_1]+Z(t)^2)\\
&\hspace{0.2in} \lesssim Z(t_0)+(1+E[u_0,u_1]^{1/2})Z(t)A(t_0,t_1)^{1/5}\log^{4/5}(3+E[u_0,u_1]+Z(t)^2),
\end{align*}
provided $A(t_0,t_1)\leq 1$, where we have also used the bound $\log(3+E[u_0,u_1]+Z(t)^2)\geq 1$.

A standard continuity argument now shows that if one has a bound of the form $A(t_0,t_1)\leq \frac{\epsilon}{\log^{4}(3+E[u_0,u_1]+Z(t_0)^2)}$ with $\epsilon$ sufficiently small (depending on $(u_0,u_1)$), then one can conclude
\begin{align*}
Z(t)\leq CZ(t_0),\quad t\in I,
\end{align*}
as desired.  To finish the proof of Theorem \ref{p.1}, one argues as in \cite{Tao}, appealing to a partitioning argument based on dividing a given time interval $[0,T]$ into a collection of subintervals $[t_i,t_{i+1}]$, $i=0,...,m$, on which the desired control on $A(t_i,t_{i+1})$ holds.  Since we give a full discussion of a closely related variant of this argument in Section \ref{s.5} below, we omit the details.

\section{Weighted Strichartz estimates with symmetry}

In this section, as preparation for our proofs of Theorem $\ref{t.1}$ and Theorem $\ref{t.2}$, we prove several weighted Strichartz estimates adapted to the symmetry conditions ($\ref{symm1}$) and ($\ref{symm2}$).  In fact, both of the relevant estimates originate in a Strichartz bound for a third symmetry condition,
\begin{align}
u(x) = \tilde{u}(|(x_{1}, x_{2})|, x_{3}, x_{4}).\label{symm3}
\end{align}
with $\tilde{u}:\mathbb{R}_+\times\mathbb{R}\times\mathbb{R}\rightarrow\mathbb{R}$, expressed in the following lemma.

\begin{lemma}\label{l2.1}
Suppose that $u$ is a solution to (NLW) which satisfies the symmetry assumption (\ref{symm3}) for all $t \in I$.
Then one has the estimate
\begin{align*}
\| (x_{1}^{2} + x_{2}^{2})^{1/8} u \|_{L_{t,x}^{4}(I \times \mathbb{R}^{4})}&\lesssim \| u_{0} \|_{\dot{H}^{1/2}} + \| u_{1} \|_{\dot{H}^{-1/2}} \\
&\hspace{1.2in}+ \| (x_{1}^{2} + x_{2}^{2})^{-1/8} F \|_{L_{t,x}^{4/3}(I \times \mathbb{R}^{4})}.
\end{align*}
\end{lemma}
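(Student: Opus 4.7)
The plan is to reduce the inhomogeneous weighted Strichartz estimate to its homogeneous counterpart via Duhamel's formula and the standard $TT^*$/Christ--Kiselev machinery, then prove the homogeneous bound
\[
\|(x_1^2 + x_2^2)^{1/8} u\|_{L^4_{t,x}(I \times \mathbb{R}^4)} \lesssim \|u_0\|_{\dot{H}^{1/2}} + \|u_1\|_{\dot{H}^{-1/2}}
\]
for free-wave solutions $u$ satisfying (\ref{symm3}), by exploiting the $SO(2)$-invariance in the $(x_1, x_2)$-plane. A scaling check under $u(t,x) \mapsto u(\lambda t, \lambda x)$ shows all three terms in the statement scale as $\lambda^{-3/2}$ (including $\|(x_1^2+x_2^2)^{-1/8}F\|_{L^{4/3}_{t,x}}$ with $F$ transforming as $\lambda^2 F(\lambda t, \lambda x)$), so the estimate is scale-invariant.

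To exploit the symmetry, I would take the partial Fourier transform in the non-symmetric variables $(x_3, x_4)$, with dual variable $\eta$. For each $\eta$, the transform $\hat u(t, x_1, x_2, \eta)$ remains radial in $(x_1, x_2)$ and satisfies the $2$D Klein--Gordon equation $\partial_t^2 \hat u - \Delta_{x_1, x_2} \hat u + |\eta|^2 \hat u = 0$. Representing $\hat u$ via the zero-order Hankel transform in $r = |(x_1, x_2)|$ gives an oscillatory integral with phase $\sqrt{\rho^2 + |\eta|^2}$ and spatial factor $J_0(r\rho)$. The Bessel asymptotic $J_0(z) \sim z^{-1/2}$ as $z \to \infty$ encodes the $r^{-1/2}$ radial decay of $2$D-radial functions; the $r^{1/4}$ weight in the statement is exactly half of this power, reflecting the natural trade-off between the weight and the $\dot H^{1/2}$-vs-$\dot H^{3/4}$ derivative gap separating the statement from the unweighted $L^4_{t,x}$ Strichartz bound on $\mathbb{R}^{1+4}$ (which at $L^4_{t,x}$ requires $\dot H^{3/4}$ data).

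The key step is then a mass-uniform 2D radial weighted Strichartz estimate for Klein--Gordon, which I would obtain by stationary-phase / $TT^*$ analysis of the Bessel kernel together with Schur's test or Hardy--Littlewood--Sobolev applied to the resulting weighted kernel. Reassembly via Plancherel in $\eta$ is required, and since the spatial norm is $L^4$ rather than $L^2$, this step needs either a Minkowski inequality or a direct physical-space argument. A particularly clean reformulation uses polar coordinates in the $(x_1, x_2)$-plane,
\[
\|(x_1^2 + x_2^2)^{1/8} u\|_{L^4_{t,x}(\mathbb{R}^{1+4})}^4 = 2\pi \int \int_{\mathbb{R}^2} \int_0^\infty r^2 |u(t, r, x_3, x_4)|^4 \, dr \, dx_3 \, dx_4 \, dt,
\]
which, under the symmetry, coincides up to a constant with $\|V\|_{L^4_{t,y}(\mathbb{R}^{1+5})}^4$ for the $3$D-radial extension $V(t, y) = u(t, |(y_1, y_2, y_3)|, y_4, y_5)$. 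This identifies the left-hand side with a conformal-level spacetime norm on $\mathbb{R}^{1+5}$, at the cost of handling the corrector coming from the mismatch $\Delta_y V - \Delta_x u = \tfrac{1}{r}\partial_r u$ between the $2$D and $3$D radial Laplacians.

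\textbf{Main obstacle.} The hardest step is the mass-uniform $2$D radial weighted Strichartz estimate for Klein--Gordon (careful analysis of the Bessel kernel in both small- and large-$\rho r$ regimes, with the large-$\rho r$ regime supplying the critical $r^{-1/2}$ gain) together with the $L^4$-vs-$L^2$ reassembly in $\eta$; alternatively, if one pursues the dimensional-lift route, the obstacle shifts to controlling the corrector term from the radial-Laplacian mismatch and relating the induced $\dot H^{5/4}(\mathbb{R}^5) \times \dot H^{1/4}(\mathbb{R}^5)$ norm of the $5$D extension to the $\dot H^{1/2} \times \dot H^{-1/2}$-norm of the $4$D data on the right-hand side.
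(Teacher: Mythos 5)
Your high-level scaffolding is right: homogeneous reduction plus Christ--Kiselev for the inhomogeneous term, and the recognition that the $r^{1/4}$ weight originates in the $r^{-1/2}$ decay of the Bessel kernel (hence of $(x_1,x_2)$-radial free waves), together with the scaling check, all match the logic of the paper. But your main route has exactly the gap you flag, and I don't think it closes: taking the Fourier transform in $(x_3,x_4)$ produces a family of 2D radial Klein--Gordon problems parametrized by $\eta$, and even if a mass-uniform weighted $L^4_{t,x_1,x_2}$ estimate holds for each slice, gluing these into a 4D $L^4_{t,x}$ bound requires putting $\eta$ inside an $L^4$-norm while Plancherel only gives you $L^2_\eta$; Minkowski moves the wrong way here, and I don't see how to fix this within that formulation. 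Your dimensional-lift alternative also stalls: the radial-Laplacian corrector $\tfrac{1}{r}\partial_r u$ is first-order and not small, and the $\dot H^{5/4}(\mathbb{R}^5)\times\dot H^{1/4}(\mathbb{R}^5)$-to-$\dot H^{1/2}(\mathbb{R}^4)\times\dot H^{-1/2}(\mathbb{R}^4)$ comparison is nontrivial and not addressed.

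The paper sidesteps both obstacles by Fourier-transforming in the \emph{symmetric} directions $(x_1,x_2)$ rather than the transverse ones. Writing the $(\xi_1,\xi_2)$-integral in polar coordinates makes the angular integral $\int_0^{2\pi} e^{irs\cos\theta}\,d\theta$ explicit, and leaves the radial Fourier variable $s$ together with the physical variables $(x_3,x_4)$ as a genuine three-dimensional configuration, so the remaining oscillatory integral $\int e^{irs+iv\cdot y+it(s^2+|v|^2)^{1/2}}\cdots\,ds\,dv$ is a 3D free wave to which the standard $L^4_{t,x}$ Strichartz estimate on $\mathbb{R}^{1+3}$ applies directly, with no reassembly step. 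The weight and the $\dot H^{1/2}$ threshold then come out of a dyadic decomposition in $r$ (spatial shells $\chi_k$) and in $s$ (Littlewood--Paley pieces $P_j$ in the $(\xi_1,\xi_2)$ frequency): when $rs\lesssim 1$ (i.e.\ $j\leq -k$) the change of variables $r\mapsto r/\cos\theta$ flattens $e^{irs\cos\theta}$ into $e^{irs}$ at the cost of an integrable Jacobian factor $|\cos\theta|^{-1/4}$; when $rs\gtrsim 1$ (i.e.\ $j>-k$) the stationary-phase expansion of $\int_0^{2\pi}e^{irs\cos\theta}\,d\theta$ into a non-stationary $(rs)^{-1}$ piece and two stationary $(rs)^{-1/2}e^{\pm irs}$ pieces supplies the required $r^{-1/2}$ gain before 3D Strichartz is invoked; Young's inequality for the resulting geometric tails in $|j+k|$ sums the pieces. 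This specific choice of which variables to transform — and the consequent reduction to an unweighted 3D Strichartz estimate rather than a sliced family of 2D ones — is the missing ingredient in your plan.
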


\begin{proof}
To simplify notation, for $x=(x_1,x_2,x_3,x_4)\in\mathbb{R}^4$, let $r=(x_1^2+x_2^2)^{1/2}$, $w=(x_1,x_2)\in\mathbb{R}^2$ and $y=(x_3,x_4)\in\mathbb{R}^2$.  Let $\xi$ denote the Fourier variable dual to $x$, and for each $\xi\in\mathbb{R}^4$ set $s=(\xi_1^2+\xi_2^2)^{1/2}$, $u=(\xi_1,\xi_2)$, and $v=(\xi_3,\xi_4)$.  Moreover, we write $u(r,y)=u(w,y)=u(x)$ for $x\in\mathbb{R}^4$ and $\widehat{u}(s,v)=\widehat{u}(u,v)=\widehat{u}(\xi)$ for $\xi\in\mathbb{R}^4)$.

We perform decompositions in both space and frequency.  Let $\phi\in C_c^\infty(\mathbb{R}^2)$ be radially symmetric and such that $\phi(x)=1$ for $|x|\leq 1$ and $\supp \phi\subset\{x:|x|\leq 2\}$.  For $k\in\mathbb{Z}$, let $\chi_k:\mathbb{R}^2\rightarrow\mathbb{R}$ be the characteristic function of the ball $\{w\in\mathbb{R}^2:2^k\leq |w|\leq 2^{k+1}\}$, and set $\phi_k(x)=\phi(2^{-k}x)$, $\psi(x)=\phi(x)-\phi(2x)$, and $\psi_k(x)=\psi(2^{-k}x)$ for $x\in\mathbb{R}^2$.  Moreover, for $f\in\mathcal{S}(\mathbb{R}^4)$, let $P_kf$ and $P_{>k}f$ be defined by $\widehat{P_kf}(\xi)=\psi_k(u)\widehat{f}(\xi)$ and $\widehat{P_{>k}f}(\xi)=\sum_{j>k}\phi_j(u)\widehat{f}(\xi)$, respectively.  Then
\begin{align}
\nonumber &\lVert r^{1/4}e^{it|\nabla|}u_0\rVert_{L_{t,x}^4}\\
\nonumber &\hspace{0.2in}=\bigg(\sum_{k\in\mathbb{Z}}\iint r\chi_k(r)|e^{it|\nabla|}u_0(t,x)|^4dxdt\bigg)^{1/4}\\
\nonumber &\hspace{0.2in}\lesssim \bigg(\sum_{k\in\mathbb{Z}} \iint r^2\chi_k(r)|e^{it|\nabla_x|}u_0(r,y)|^4drdydt\bigg)^{1/4}\\
\nonumber &\hspace{0.2in}\lesssim \bigg(\sum_{k\in\mathbb{Z}} 2^{2k}\lVert \chi_k(r)e^{it|\nabla_x|}u_0(r,y)\rVert_{L_{t,r,y}^4}^4\bigg)^{1/4}\\
\nonumber &\hspace{0.2in}\lesssim \bigg(\sum_k\Big[\sum_{j\leq -k} 2^{k/2}\lVert \chi_k(r)e^{it|\nabla_x|}P_ju_0(r,y)\rVert_{L_{t,r,y}^4}\\
&\hspace{1.2in}+2^{k/2}\lVert \chi_k(r)e^{it|\nabla_x|}P_{>-k}u_0(r,y)\rVert_{L_{t,r,y}^4}\Big]^4\bigg)^{1/4}.\label{3.4}
\end{align}

We begin by estimating the contributions of terms corresponding to $(k,j)$ with $j\leq -k$.  For such terms, one has
\begin{align}
\nonumber &\lVert \chi_k(r)e^{it|\nabla_x|}P_ju_0(r,y)\rVert_{L_{t,r,y}^4}\\
\nonumber &\hspace{0.2in}=\bigg\lVert \int_0^{2\pi}\int \chi_k(r)se^{irs\cos(\theta)+iv\cdot y+it(s^2+|v|^2)^{1/2}}\psi_j(s)\widehat{u_0}(s,v)dsdvd\theta\bigg\rVert_{L_{t,r,y}^4}\\
\nonumber &\hspace{0.2in}\lesssim \int_0^{2\pi}\bigg\lVert \int se^{irs\cos(\theta)+iv\cdot y+it(s^2+|v|^2)^{1/2}}\psi_j(s)\widehat{u_0}(s,v)dsdv\bigg\rVert_{L_{t,r,y}^4}d\theta\\
&\hspace{0.2in}\lesssim \int_0^{2\pi}|\cos(\theta)|^{-1/4}\bigg\lVert \int e^{irs+iv\cdot y+it(s^2+|v|^2)^{1/2}}s\psi_j(s)\widehat{u_0}(s,v)dsdv\bigg\rVert_{L_{t,r,y}^4}d\theta.\label{3.3}
\end{align}
where we've used the change of variables $r\mapsto r/\cos(\theta)$ in the last inequality.  Invoking three-dimensional Strichartz estimates for the wave equation, we obtain
\begin{align*}
(\ref{3.3})&\lesssim \int_0^{2\pi}|\cos(\theta)|^{-1/4}\lVert (s^2+|v|^2)^{1/4}s\psi_j(s)\widehat{u_0}(s,v)\rVert_{L_{s,v}^2}d\theta\\
&\lesssim \int_0^{2\pi} |\cos(\theta)|^{-1/4}\lVert (|u|^2+|v|^2)^{1/4}|u|^{1/2}\psi_j(u)\widehat{u_0}(u,v)\rVert_{L_{u,v}^2}d\theta\\
&\lesssim 2^{j/2}\lVert P_ju_0\rVert_{\dot{H}_x^{1/2}}.
\end{align*}

Fixing $k\in\mathbb{Z}$, we now turn to the $P_{>-k}u_0$ contribution.  By standard estimates for oscillatory integrals, there exists $g\in C^\infty([0,2\pi])$ such that
\begin{align*}
\int_0^{2\pi} e^{isr\cos(\theta)}d\theta&=(sr)^{-1}\int_0^{2\pi} g(\theta)e^{isr\cos(\theta)}d\theta\\
&\hspace{1.2in}+C_1e^{isr}(sr)^{-1/2}+C_2e^{-isr}(sr)^{-1/2}.
\end{align*}
This gives
\begin{align*}
&\lVert \chi_k(r)e^{it|\nabla_x|}P_{>-k}u_0(r,y)\rVert_{L_{t,r,y}^4}\lesssim (I)_k+(II)_k,
\end{align*}
with
\begin{align*}
(I)_k&:=\int_0^{2\pi}\bigg\lVert \chi_k(r)r^{-1}\int e^{irs\cos(\theta)+iv\cdot y+it(s^2+|v|^2)^{1/2}}\\
&\hspace{2.2in}\cdot\psi_{>-k}(s)\widehat{u_0}(s,v)dsdv\bigg\rVert_{L_{t,r,y}^4}|g(\theta)|d\theta,
\intertext{and}
(II)_k&:=\sum_{\ell\in\{-1,1\}} 2^{-k/2}\bigg\lVert \chi_k(r)\int s^{1/2}e^{i\ell rs+iv\cdot y+it(s^2+|v|^2)^{1/2}}\\
&\hspace{2.2in}\cdot\psi_{>-k}(s)\widehat{u_0}(s,v)dsdv\bigg\rVert_{L_{t,r,y}^4}
\end{align*}
where we have used the definition $\psi_{>-k}:=\sum_{j>-k} \psi_k$.

To estimate $(I)_k$, note that, as before, using the change of variables $r\mapsto r/\cos(\theta)$ and invoking three-dimensional Strichartz estimates, 
\begin{align*}
(I)_k
&\lesssim 2^{-k}\sum_{j>-k}\lVert (s^2+|v|^2)^{1/4}\psi_j(s)\widehat{u_0}(s,v)\rVert_{L_{s,v}^2}\\
&\lesssim 2^{-k}\sum_{j>-k}\lVert (|u|^2+|v|^2)^{1/4}|u|^{-1/2}\psi_j(s)\widehat{u_0}(s,v)\rVert_{L_{u,v}^2}\\
&\lesssim \sum_{j>-k} 2^{-k-j/2}\lVert P_ju_0\rVert_{\dot{H}_x^{1/2}}.
\end{align*}

Collecting these bounds, we get
\begin{align}
(\ref{3.4})
&\lesssim \bigg(\sum_k\Big[\sum_{j\in\mathbb{Z}} 2^{-|j+k|/2}\lVert P_ju_0\rVert_{\dot{H}_x^{1/2}} \Big]^4\bigg)^{1/4}+\bigg(\sum_k\Big[2^{k/2}(II)_k\Big]^4\bigg)^{1/4}.\label{3.5}
\end{align}

Recalling the definition of the functions $\chi_k$, $k\in\mathbb{Z}$, as characteristic functions,
\begin{align}
\nonumber &\bigg(\sum_{k} \Big[2^{k/2}(II)_k\Big]^4\bigg)^{1/4}\\
\nonumber &\hspace{0.2in}\lesssim \sum_{\ell\in \{-1,1\}}\bigg\lVert \int s^{1/2}e^{i\ell rs+iv\cdot y+it(s^2+|v|^2)^{1/2}}\psi_{>-k}(s)\widehat{u_0}(s,v)dsdv\bigg\rVert_{L_{t,r,y}^4}\\
\nonumber &\hspace{0.2in}\lesssim \lVert (s^2+|v|^2)^{1/4}s^{1/2}\psi_{>-k}(s)\widehat{u_0}(s,v)\rVert_{L_{s,v}^2}\\
\nonumber &\hspace{0.2in}\lesssim \lVert (|u|^2+|v|^2)^{1/4}\psi_{>-k}(u)\widehat{u_0}(s,v)\rVert_{L_{u,v}^2}\\
\nonumber &\hspace{0.2in}\lesssim \lVert P_{>-k}u_0\rVert_{\dot{H}_x^{1/2}}\\
&\hspace{0.2in}\lesssim \lVert u_0\rVert_{\dot{H}_x^{1/2}}.\label{3.6}
\end{align}

Combining ($\ref{3.5}$) and ($\ref{3.6}$) and using Young's inequality, we obtain
\begin{align*}
\lVert r^{1/4}e^{it|\nabla|}u_0\rVert_{L_{t,x}^4}&\lesssim \lVert u_0\rVert_{\dot{H}_x^{1/2}}.
\end{align*}
The desired result now follows from duality considerations and the Christ-Kiselev lemma.
\end{proof}

We now give a weighted Strichartz estimate associated to the symmetry condition ($\ref{symm1}$).

\begin{lemma}\label{l1.3}
If $u$ is a solution the wave equation (NLW) on a time interval $I$ which satisfies
the symmetry condition (\ref{symm1}), i.e.
\begin{align*}
u(t, x_{1}, x_{2}, x_{3}, x_{4}) = u(t, x_{1}^{2} + x_{2}^{2} + x_{3}^{2}, x_{4}),
\end{align*}
then one has the estimate
\begin{align}
\| |x'|^{1/4} u \|_{L_{t,x}^{4}(I \times \mathbb{R}^{4})} \lesssim \| u_{0} \|_{\dot{H}^{1/2}(\mathbb{R}^{4})} + \| u_{1} \|_{\dot{H}^{-1/2}(\mathbb{R}^{4})} + \| |x'|^{-1/4} F \|_{L_{t,x}^{4/3}(I \times \mathbb{R}^{4})},\label{1.30}
\end{align}
where $x=(x_1,x_2,x_3,x_4)\in\mathbb{R}^4$ and $x'=(x_1,x_2,x_3)$.
\end{lemma}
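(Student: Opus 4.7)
The plan is to deduce Lemma \ref{l1.3} from Lemma \ref{l2.1} by averaging over the $O(3)$-orbit of coordinate frames acting on $x'=(x_1,x_2,x_3)$, exploiting that the full rotational symmetry (\ref{symm1}) implies the $SO(2)$ invariance (\ref{symm3}) in \emph{every} 2-plane $V\subset\mathbb{R}^3$. For each unit vector $e\in S^2\subset\mathbb{R}^3$, applying Lemma \ref{l2.1} in a coordinate frame in which $e$ plays the role of the ``$x_3$''-axis (so that the ``$(x_1,x_2)$''-plane becomes $e^{\perp}\subset\mathbb{R}^3$), and using the rotation invariance of the Sobolev norms and of $u$, $F$ themselves, produces the family of inequalities
\[
\|d(x',e)^{1/4}u\|_{L_{t,x}^4(I\times\mathbb{R}^4)}\lesssim \|u_0\|_{\dot{H}^{1/2}}+\|u_1\|_{\dot{H}^{-1/2}}+\|d(x',e)^{-1/4}F\|_{L_{t,x}^{4/3}(I\times\mathbb{R}^4)}
\]
with implicit constant uniform in $e$, where $d(x',e):=|x'-(x'\cdot e)e|=|x'|\sin\alpha(x',e)$ denotes the Euclidean distance from $x'$ to the line $\mathbb{R}e$.

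Next I would raise this family of inequalities to the fourth power and integrate in $e$ against the spherical measure $de=\sin\alpha\,d\alpha\,d\phi$. Elementary integrations give $\int_{S^2}d(x',e)\,de=\pi^2|x'|$ and $\int_{S^2}d(x',e)^{-1}\,de=2\pi^2|x'|^{-1}$ (both finite, the singular factor $\sin^{-1}\alpha$ in the second being absorbed by the Jacobian $\sin\alpha$). Consequently the integrated LHS collapses to $\pi^2\||x'|^{1/4}u\|_{L_{t,x}^4}^4$, which is precisely $\pi^2$ times the fourth power of the target norm, while the Sobolev-norm terms on the RHS, being independent of $e$, simply pick up a factor of $|S^2|=4\pi$.

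The delicate step is controlling the $F$-contribution $\int_{S^2}\|d(x',e)^{-1/4}F\|_{L_{t,x}^{4/3}}^4\,de = \int_{S^2}\bigl(\int d(x',e)^{-1/3}|F|^{4/3}\,dx\,dt\bigr)^3\,de$. Since $d(x',e)^{-1}\geq|x'|^{-1}$ pointwise in $e$, neither a direct pointwise comparison nor a Jensen-type inequality goes in the right direction. The resolution is to invoke Minkowski's integral inequality with exponent $p=3$, swapping the $e$- and $(t,x)$-integrations to bound the above by
\[
\Bigl(\int|F|^{4/3}\Bigl(\int_{S^2}d(x',e)^{-1}\,de\Bigr)^{1/3}dx\,dt\Bigr)^{3}=2\pi^2\||x'|^{-1/4}F\|_{L_{t,x}^{4/3}}^{4}.
\]
Combining the three averaged pieces and taking fourth roots then yields (\ref{1.30}).

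The main obstacle I anticipate is precisely the direction of this Minkowski step. The conditions $p=3\geq 1$ and the integrability of $\sin^{-1}\alpha$ against the spherical Jacobian $\sin\alpha$ are what convert the spherical average of $d(x',e)^{-1}$ into the sharp $|x'|^{-1}$ profile and close the argument; getting this direction right, rather than attempting to pointwise-dominate the individual weights $d(x',e)^{-1/4}$ by $|x'|^{-1/4}$, is the decisive observation.
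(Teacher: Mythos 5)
Your proposal is correct and yields the lemma, but it follows a genuinely different route from the paper. The paper applies Lemma~\ref{l2.1} three times, to the pairs $(x_1,x_2)$, $(x_1,x_3)$, $(x_2,x_3)$, sums the resulting bounds (its estimate~(\ref{1.32})), then passes to the inhomogeneous term by dualizing the homogeneous estimate and invoking the Christ--Kiselev lemma; the comparability of each anisotropic weight $(x_i^2+x_j^2)^{\pm 1/8}$ with $|x'|^{\pm 1/4}$ on the class of $(\ref{symm1})$-symmetric functions is the mechanism that makes the three-term sum and its dual collapse to the isotropic $|x'|$-weight. You instead average Lemma~\ref{l2.1} over the continuous family of frames $e\in S^2$, which avoids the duality/Christ--Kiselev step entirely because Lemma~\ref{l2.1} already delivers the inhomogeneous estimate for each $e$; the price is the Minkowski step you correctly identify as the crux. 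Your spherical integrals are right ($\int_{S^2}\sin\alpha\,de=\pi^2$, $\int_{S^2}\sin^{-1}\alpha\,de=2\pi^2$), and Minkowski with exponent $p=3\geq 1$ is applied in the valid direction, converting $\int_{S^2}\|d(x',e)^{-1/4}F\|_{L^{4/3}}^4\,de$ into a constant times $\||x'|^{-1/4}F\|_{L^{4/3}}^4$. One point worth spelling out explicitly in a final write-up: in the rotated frame, $(\ref{symm1})$-symmetry of $u$ and $F$ gives $(\ref{symm3})$-symmetry with $e^\perp$ in the role of the $(x_1,x_2)$-plane, since $|x'|^2 = d(x',e)^2 + (x'\cdot e)^2$, so $u$ is a function of $(d(x',e),\,x'\cdot e,\,x_4)$; this, together with rotation invariance of the wave operator and of the $\dot{H}^{\pm 1/2}$ norms, is what makes the implicit constant uniform in $e$. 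Overall your continuous averaging is cleaner and more self-contained than the paper's discrete three-frame version, which can be seen as the degenerate case of your argument in which the $S^2$-average is replaced by a finite sum together with a pointwise lower bound such as $\max_{i<j}(x_i^2+x_j^2)\geq \tfrac{2}{3}|x'|^2$.
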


\begin{proof}
Let $S(t)$ be the solution operator to the wave equation, that is, $S(t)(f, g) = u(t)$ is the solution to the linear wave equation
\begin{align*}
u_{tt} - \Delta u = 0, \hspace{5mm} u(0) &= f, \hspace{5mm} u_{t}(0) = g.
\end{align*}

Now, by Lemma $\ref{l2.1}$,
\begin{align}
\nonumber &\| (x_{1}^{2} + x_{2}^{2})^{1/8} u \|_{L_{t,x',y}^{4}} + \| (x_{1}^{2} + x_{3}^{2})^{1/8} u \|_{L_{t,x',y}^{4}} \\
&\hspace{0.4in}+ \| (x_{2}^{2} + x_{3}^{2})^{1/8} u \|_{L_{t,x',y}^{4}} \lesssim \| f \|_{\dot{H}^{1/2}} + \| g \|_{\dot{H}^{-1/2}}.\label{1.32}
\end{align}
In view of this, we observe that the dual of the estimate $(\ref{1.32})$ is
\begin{align*}
\left\| \int S(-t)(0, F) dt \right\|_{\dot{H}^{1/2} \times \dot{H}^{-1/2}} \lesssim \| |x'|^{-1/4} F \|_{L_{t,x',y}^{4/3}}.
\end{align*}
The desired estimate $(\ref{1.30})$ now follows by the Christ-Kiselev lemma.
\end{proof}

As another corollary which will be useful in the next section, we obtain a Strichartz bound involving second derivatives.

\begin{corollary}\label{c1}
For solutions $u$ of (NLW) as in Lemma \ref{l1.3}, one has the estimate
\begin{align}
\nonumber \| |x'|^{1/4} D^{2} u \|_{L_{t,x}^{4}(I \times \mathbb{R}^{4})}&\lesssim \| u_{0} \|_{\dot{H}^{5/2}(\mathbb{R}^{4})} + \| u_{1} \|_{\dot{H}^{3/2}(\mathbb{R}^{4})} \\
&\hspace{0.2in}+ \| |x'|^{-1/4} D^{2} F \|_{L_{t,x}^{4/3}(I \times \mathbb{R}^{4})}.\label{1.34}
\end{align}
\end{corollary}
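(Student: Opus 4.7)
The plan is to deduce Corollary \ref{c1} directly from Lemma \ref{l1.3} by commuting the Fourier multiplier $D^{2} = |\nabla|^{2}$ past the wave equation. Set $v := D^{2} u$. Since $|\nabla|^{2} = -\Delta$ commutes with both $\partial_{t}^{2}$ and $\Delta$, applying $D^{2}$ to
\begin{align*}
u_{tt} - \Delta u + F = 0
\end{align*}
shows that $v$ solves the inhomogeneous linear wave equation $v_{tt} - \Delta v + D^{2} F = 0$ with initial data $(D^{2} u_{0}, D^{2} u_{1})$.

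The key observation is that $v$ inherits the symmetry (\ref{symm1}) from $u$. Indeed, $-\Delta$ is a scalar Fourier multiplier on $\mathbb{R}^{4}$, and is therefore invariant under every orthogonal transformation of the spatial variables; in particular it commutes with the action of $SO(3)$ on the $(x_{1}, x_{2}, x_{3})$ coordinates that fixes $x_{4}$. Hence if $u(t, x) = \tilde{u}(t, |x'|, x_{4})$ then $v(t, x) = \widetilde{v}(t, |x'|, x_{4})$ as well, so that Lemma \ref{l1.3} is applicable to $v$.

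Applying Lemma \ref{l1.3} to $v$ yields
\begin{align*}
\| |x'|^{1/4} D^{2} u \|_{L_{t,x}^{4}(I \times \mathbb{R}^{4})} \lesssim \| D^{2} u_{0} \|_{\dot{H}^{1/2}} + \| D^{2} u_{1} \|_{\dot{H}^{-1/2}} + \| |x'|^{-1/4} D^{2} F \|_{L_{t,x}^{4/3}(I \times \mathbb{R}^{4})}.
\end{align*}
To finish, I would identify the two Sobolev norms on the right-hand side: for any $s \in \mathbb{R}$, $\| D^{2} w \|_{\dot{H}^{s}} = \| |\nabla|^{s+2} w \|_{L^{2}} = \| w \|_{\dot{H}^{s+2}}$, whence $\| D^{2} u_{0} \|_{\dot{H}^{1/2}} = \| u_{0} \|_{\dot{H}^{5/2}}$ and $\| D^{2} u_{1} \|_{\dot{H}^{-1/2}} = \| u_{1} \|_{\dot{H}^{3/2}}$. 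This gives (\ref{1.34}).

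The only substantive point is the preservation of the partial symmetry under $D^{2}$, which is immediate from the rotational invariance of $-\Delta$; no additional harmonic analysis is required beyond that already carried out in the proof of Lemma \ref{l1.3}.
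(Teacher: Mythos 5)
Your argument is correct, and since the paper states the corollary without proof, this is exactly the intended derivation: apply $D^2=|\nabla|^2$ to the equation, observe that this scalar Fourier multiplier is rotation-invariant and hence preserves the partial symmetry (\ref{symm1}), apply Lemma \ref{l1.3} to $D^2u$, and identify $\|D^2w\|_{\dot H^s}=\|w\|_{\dot H^{s+2}}$. One point is worth flagging. The paper's notation $D^2$ oscillates between the Fourier multiplier $|\nabla|^2$ and the Hessian $\nabla^2_{x',y}$; in particular, in (\ref{1.35})--(\ref{1.36}) the corollary is invoked with $D^2F$ read as $\nabla^2_{x',y}(u^3\log(3+u^2))$ so that the Leibniz rule can be applied. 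Your commutation argument is valid only for $|\nabla|^2$: the individual Hessian components $\partial_i\partial_j u$ do \emph{not} inherit (\ref{symm1}) (e.g.\ $\partial_1^2 u$ picks up an angular factor $x_1^2/|x'|^2$). To legitimately pass between the two readings one should note that $|x'|$ is an $A_4(\mathbb{R}^4)$ weight and $|x'|^{-1/3}$ an $A_{4/3}(\mathbb{R}^4)$ weight (power weights $|x'|^\alpha$ of a three-dimensional subvariable lie in $A_p(\mathbb{R}^4)$ for $-3<\alpha<3(p-1)$), so Riesz transforms are bounded on the relevant weighted Lebesgue spaces and $\||x'|^{\pm 1/4}\nabla^2 u\|_{L^{4}\text{ resp.\ }L^{4/3}}\approx\||x'|^{\pm 1/4}|\nabla|^2 u\|_{L^4\text{ resp.\ }L^{4/3}}$. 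With this small supplement your proof covers both interpretations of the statement.
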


\section{Proof of Theorem \ref{t.1}}
\label{s.5}

In this section, we prove our main result, Theorem $\ref{t.1}$, on global well-posedness for solutions with initial data satisfying the symmetry condition (\ref{symm1}), that is, $$u(x)=\widetilde{u}(|x'|,x_4),\quad x\in\mathbb{R}^4,$$ with $x'=(x_1,x_2,x_3)\in\mathbb{R}^3$.  We first introduce a variant of the Morawetz estimate adapted to this axially symmetric setting.

\begin{proposition}
\label{p.morawetz}
If $u$ solves (NLW) on $I\times\mathbb{R}^4$ and satisfies the symmetry condition (\ref{symm1}), then
\begin{align*}
\iint_{I\times\mathbb{R}^4} \frac{u(t,x)^4\log(3+u(t,x)^2)}{|x'|}dtdx\lesssim E[u_0,u_1].
\end{align*}
\end{proposition}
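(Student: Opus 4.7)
The plan is to adapt the classical Morawetz multiplier method, replacing the usual weight $|x|$ with one depending only on the symmetric variables $x'=(x_{1},x_{2},x_{3})$. Taking $a(x)=|x'|$, I would introduce the multiplier
\[
V:=\frac{x'}{|x'|}\cdot\nabla_{x'} u+\frac{u}{|x'|},
\]
which is the standard $\mathbb{R}^{3}$-radial Morawetz multiplier in the $x'$ variables, trivially extended in $x_{4}$, and define $M(t):=\int_{\mathbb{R}^{4}} u_{t}\,V\,dx$. The factor $1/|x'|$ in front of $u$ is $\tfrac12\Delta a$, reflecting that $|x'|$ is the radial variable in a three-dimensional slice.

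First I would differentiate $M(t)$ in time. Using $u_{tt}=\Delta u-F(u)$ and integration by parts in space (after observing that $\int u_{t}V_{t}\,dx=0$ by a quick symmetrization), I would derive the standard Morawetz identity
\begin{align*}
\frac{d}{dt}M(t) &= -\int (D^{2}a)(\nabla u,\nabla u)\,dx + \frac14\int u^{2}\,\Delta^{2}a\,dx \\
&\hspace{0.6in} - \frac12\int (\Delta a)\,[F(u)u-2G(u)]\,dx.
\end{align*}
For $a=|x'|$, one computes $\Delta a=2/|x'|$, $D^{2}a\geq 0$ (the Hessian of the $\mathbb{R}^{3}$-radial function $|x'|$, with vanishing $x_{4}$-components), and $\Delta^{2}a=-8\pi\,\delta_{\{x'=0\}}$ as a distribution supported on the $x_{4}$-axis. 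For our defocusing nonlinearity, an elementary calculus check (using that the quantity is even in $u$ and its derivative $F'(u)u-F(u)=2u^{3}\log(3+u^{2})+2u^{5}/(3+u^{2})$ is nonnegative on $[0,\infty)$) gives $F(u)u-2G(u)\gtrsim u^{4}\log(3+u^{2})$. Consequently every term on the right-hand side is nonpositive.

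Next I would bound $|M(t)|$ uniformly by the energy. The piece $\int u_{t}(x'/|x'|)\cdot\nabla_{x'}u\,dx$ is controlled by Cauchy--Schwarz and conservation of $E[u,u_{t}]$. For $\int u_{t}\,u/|x'|\,dx$, the key input is the three-dimensional Hardy inequality
\[
\int_{\mathbb{R}^{3}}\frac{|u(t,\cdot,x_{4})|^{2}}{|x'|^{2}}\,dx'\lesssim\int_{\mathbb{R}^{3}}|\nabla_{x'}u(t,\cdot,x_{4})|^{2}\,dx',
\]
applied slicewise in $x_{4}$ and then integrated, which yields $\|u/|x'|\|_{L^{2}(\mathbb{R}^{4})}\lesssim\|\nabla u\|_{L^{2}(\mathbb{R}^{4})}$. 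Combining gives $|M(t)|\lesssim E[u_{0},u_{1}]$. Integrating the Morawetz identity over $I=[t_{0},t_{1}]$ and discarding the two nonpositive terms from the $D^{2}a$ and $\Delta^{2}a$ contributions then yields
\[
\iint_{I\times\mathbb{R}^{4}}\frac{u^{4}\log(3+u^{2})}{|x'|}\,dx\,dt\lesssim M(t_{0})-M(t_{1})\lesssim E[u_{0},u_{1}],
\]
as required.

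The main technical obstacle is justifying the Morawetz identity rigorously, in view of the singularity of $|x'|$ along the codimension-three line $\{x'=0\}\subset\mathbb{R}^{4}$. The natural approach is to regularize by replacing $|x'|$ with $(|x'|^{2}+\varepsilon)^{1/2}$, carry out the identity for the smoothed multiplier where every integration by parts is unambiguously valid, and pass to the limit $\varepsilon\to 0^{+}$. In this limit, the bilaplacian term concentrates as the $\delta$-distribution on the $x_{4}$-axis; the trace regularity afforded by $u\in\tilde H^{5/2}(\mathbb{R}^{4})$ (and, under the symmetry hypothesis (\ref{symm1}), the explicit identification of this trace as $\tilde u(\cdot,0,x_{4})$) makes the boundary term well-defined. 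Since it enters with a favorable sign, it can simply be discarded.
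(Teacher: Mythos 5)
Your proposal follows essentially the same route as the paper: the paper's two potentials, $M_{1}(t)=\int u_{t}\frac{x'}{|x'|}\cdot\nabla_{x'}u\,dx$ and $M_{2}(t)=\int u_{t}\,u/|x'|\,dx$, are precisely the two pieces of your $M(t)=\int u_{t}V\,dx$ with $V=\nabla a\cdot\nabla u+\tfrac12(\Delta a)u$, $a(x)=|x'|$; the paper then carries out the same computation as your ``standard Morawetz identity,'' uses the same sign information on $D^{2}a$ and on $\Delta_{x'}(1/|x'|)$, and the same slicewise Hardy inequality to bound $\sup_{t}|M(t)|$ by the energy. One small caveat: nonnegativity of $\frac{d}{du}\bigl[F(u)u-2G(u)\bigr]=F'(u)u-F(u)$ by itself only yields $F(u)u-2G(u)\geq 0$, not the quantitative bound $\gtrsim u^{4}\log(3+u^{2})$; to close this, observe $F'(u)u-F(u)=2F(u)+\frac{2u^{5}}{3+u^{2}}\geq 2F(u)$, so integrating gives $F(u)u-2G(u)\geq 2G(u)$ and hence $F(u)u\geq 4G(u)$, i.e.\ $F(u)u-2G(u)\geq\tfrac12 u^{4}\log(3+u^{2})$ (equivalently, and as the paper does, use directly that $\log(3+t^{2})$ is nondecreasing to get $G(u)\leq\tfrac14 u^{4}\log(3+u^{2})$). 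Your $\varepsilon$-regularization of $|x'|$ to justify the integrations by parts near the axis is a sound addition that the paper leaves implicit.
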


\begin{proof}
Define the Morawetz potentials,
\begin{align*}
M_{1}(t) = \int u_{t}(t,x', y) \frac{x'}{|x'|} \cdot \nabla_{x'} u(t,x', y) dx' dy
\end{align*}
and
\begin{align*}
M_{2}(t) = \int u_{t}(t,x',y) u(t,x',y) \frac{1}{|x'|} dx' dy.
\end{align*}

This leads to 
\begin{align*}
\frac{d}{dt} M_{1}(t)&=\int u_{tt}(t,x',y) \frac{x'}{|x'|} \cdot \nabla_{x'} u(t,x',y) dx' dy \\
&\hspace{0.2in}+ \int u_{t}(t,x',y) \frac{x'}{|x'|} \cdot \nabla_{x'} u_{t}(t,x',y) dx' dy,
\end{align*}
so that, integrating by parts and using (NLW), 
\begin{align*}
\frac{d}{dt}M_1(t)&=-\int \frac{1}{|x'|} u_{t}(t,x',y)^{2} dx' dy + \int \Delta u(t,x',y) \frac{x'}{|x'|} \cdot \nabla_{x'} u(t,x',y) dx' dy \\
&\hspace{0.2in} - \int u^{3} \log(3 + u^{2})(t,x',y) \frac{x'}{|x'|} \cdot \nabla_{x'} u(t,x',y) dx' dy.
\end{align*}

Writing $\Delta u= \Delta_{x'}u+ \Delta_{y}u$ and integrating by parts again,
\begin{align*}
&\int \Delta u(t,x',y)\frac{x'}{|x'|}\cdot \nabla_{x'}u(t,x',y)dx'dy\\
&\hspace{0.2in}=\int \Delta_{y} u(t,x',y) \frac{x'}{|x'|} \cdot \nabla_{x'} u(t,x',y) dx' dy \\
&\hspace{0.4in}+ \int \Delta_{x'} u(t,x',y) \frac{x'}{|x'|} \cdot \nabla_{x'} u(t,x',y) dx' dy \\
&\hspace{0.2in}= -\int \nabla_{y} u(t,x',y) \frac{x'}{|x'|} \cdot \nabla_{x'} \nabla_{y} u(t,x',y) dx' dy \\
&\hspace{0.4in}+ \int \Delta_{x'} u(t,x',y) \frac{x'}{|x'|} \cdot \nabla_{x'} u(t,x',y) dx' dy,
\end{align*}
which is in turn equal to 
\begin{align*}
\int \frac{|\nabla_{y} u(t,x',y)|^{2}}{|x'|} dx' dy + \int \Delta_{x'} u(t,x',y) \frac{x'}{|x'|} \cdot \nabla_{x'} u(t,x',y) dx' dy.
\end{align*}

Next, adopting the Einstein summation convention and integrating by parts once more,
\begin{align*}
&\int \Delta_{x'} u(t,x',y) \frac{x'}{|x'|} \cdot \nabla_{x'} u(t,x',y) dx' dy \\
&\hspace{0.2in}= \int \partial_{k}^{2} u(t,x',y) \frac{x'_{j}}{|x'|} \partial_{j} u(t,x',y) dx' dy \\
&\hspace{0.2in}= -\int \partial_{k} u(t,x',y) [\frac{\delta_{jk}}{|x'|} - \frac{x'_{j} x'_{k}}{|x'|^{3}}] \partial_{j} u(t,x',y) dx' dy \\
&\hspace{0.4in}- \int \partial_{k} u(t,x',y) \frac{x'_{j}}{|x'|} \partial_{j} \partial_{k} u(t,x',y) dx' dy.
\end{align*}
Evaluating the summation, this expression is equal to
\begin{align*}
\int \partial_{k} u(t,x',y) [\frac{x'_{j} x'_{k}}{|x'|^{3}}] \partial_{j} u(t,x',y) dx' dy,
\end{align*}
which (again integrating by parts) is the same as
\begin{align*}
\int \frac{(\partial_{r_{x'}} u(t,x',y))^{2}}{|x'|} dx' dy.
\end{align*}

Therefore,
\begin{align*}
\frac{d}{dt} M_{1}(t)&=-\int \frac{1}{|x'|} u_{t}(t,x',y)^{2} dx' dy \\
&\hspace{0.2in}+ \int \frac{1}{|x'|} |\nabla_{y} u(t,x',y)|^{2} dx' dy \\
&\hspace{0.2in}+ \int \frac{1}{|x'|} (\partial_{r_{x'}} u(t,x',y))^{2} dx' dy \\
&\hspace{0.2in}- \int u^{3} \log(3 + u^{2})(t,x',y) \frac{x'}{|x'|} \cdot \nabla_{x'} u(t,x',y) dx' dy.
\end{align*}

We next estimate the derivative of $M_2$, writing
\begin{align*}
\frac{d}{dt} M_{2}(t)&= \int (\Delta u - u^{3} \log(3 + u^{2}))(t,x',y) u(t,x',y) \frac{1}{|x'|} dx' dy\\
&\hspace{0.2in}+ \int u_{t}(t,x',y)^{2} \frac{1}{|x'|} dx' dy. 
\end{align*}
Again writing $\Delta u= \Delta_{x'}u+ \Delta_{y}u$ and integrating by parts,
\begin{align*}
\int \Delta u(t,x',y) \frac{1}{|x'|} u(t,x',y) dx' dy = -\int |\nabla_{y} u(t,x',y)|^{2} \frac{1}{|x'|} dx' dy \\ - \int |\nabla_{x'} u(t,x',y)|^{2} \frac{1}{|x'|} dx' dy + \frac{1}{2} \int u(t,x',y)^{2} \Delta_{x'}(\frac{1}{|x'|}) dx' dy.
\end{align*}

Now, since $\Delta_{x'}(\frac{1}{|x'|}) \leq 0$,
\begin{align*}
\frac{d}{dt} (M_{1}(t) + M_{2}(t))&\leq -u^{4} \log(3 + u^{2})(t,x',y) \frac{1}{|x'|} dx' dy \\
&\hspace{0.2in}- \int u^{3} \log(3 + u^{2})(t,x',y) \frac{x'}{|x'|} \cdot \nabla_{x'} u(t,x',y) dx' dy.
\end{align*}
Integrating by parts and using
\begin{align*}
\int_{0}^{u} (x')^{3} \log(3 + (x')^{2}) dx' \leq \frac{1}{4} u^{4} \log(3 + u^{2}),
\end{align*}
one obtains
\begin{align*}
\frac{d}{dt} (M_{1}(t) + M_{2}(t)) \leq -\frac{1}{2} \int \log(3 + u^{2}) u^{4} \frac{1}{|x'|} dx' dy.
\end{align*}

Therefore, by the fundamental theorem of calculus,
\begin{align*}
\int_{I} \int \frac{u^{4} \log(3 + u^{2})(t,x',y)}{|x'|} dx' dy dt \lesssim \sup_{t \in I} |M_{1}(t)| + |M_{2}(t)|.
\end{align*}

Now, since $\frac{x'}{|x'|} \leq 1$, one has
\begin{align*}
|M_{1}(t)| \leq \| u_{t}(t,x',y) \|_{L^{2}(\mathbb{R}^{4})} \| \nabla_{x'} u(t,x',y) \|_{L^{2}(\mathbb{R}^{4})} \leq \| u_{t} \|_{L^{2}(\mathbb{R}^{4})} \| \nabla u \|_{L^{2}(\mathbb{R}^{4})}.
\end{align*}

Also, by Hardy's inequality, for any $y \in \mathbb{R}$,
\begin{align*}
\left\| \frac{1}{|x'|} u(t,x',y) \right\|_{L_{x'}^{2}(\mathbb{R}^{3})} \lesssim \| \nabla_{x'} u(t, \cdot, y) \|_{L_{x'}^{2}(\mathbb{R}^{3})},
\end{align*}
so that
\begin{align*}
\int \frac{|u(t,x',y)|^2}{|x'|^2} dx' dy \lesssim \| \nabla u(t) \|_{L^{2}(\mathbb{R}^{4})}^{2}.
\end{align*}
\end{proof}

We now turn to the proof of Theorem $\ref{t.1}$.  The proof is based on a bootstrap procedure and continuity argument.

\begin{proof}[Proof of Theorem $\ref{t.1}$]
For $t\in I$, define 
\begin{align}
\nonumber Z_{t_0}(t)&:=\lVert |x'|^{1/4}D^2u\rVert_{L_{t,x}^4([t_0,t]\times\mathbb{R}^4)}\\
&\hspace{0.2in}+\sup_{t_0\leq t'\leq t}\lVert |\nabla|^{5/2}u(t')\rVert_{L_x^2(\mathbb{R}^4)}+\lVert |\nabla|^{3/2}u_t(t')\rVert_{L_x^2(\mathbb{R}^4)}.\label{2.1}
\end{align}

We first establish a result for general intervals $I\subset I_{\textrm{max}}$, analogous to the estimate shown in Section 2 above.  For $t_0,t_1\in \mathbb{R}$ and $I=[t_0,t_1]$, define 
\begin{align*}
\tilde{A}(t_0,t_1):=\int_{t_0}^{t_1}\int_{\mathbb{R}^4} \frac{|u(t,x)|^{4}\log(3+u(t,x)^2)}{|x'|}dxdt.
\end{align*}

We claim that there exist $\epsilon>0$ and $C>0$ so that for all $I=[t_0,t_1]\subset \mathbb{R}$, if 
\begin{align}
\tilde{A}(t_0,t_1)\leq \frac{\epsilon}{\log(3+E[u_0,u_1]+Z(t_0)^2)},\label{5.1}
\end{align}
then $Z_{t_0}(t)\leq CZ_{t_0}$ for all $t\in [t_0,t_1]$, where 
\begin{align}
Z_{t_0}:=\lVert |\nabla|^{5/2}u(t_0)\rVert_{L_{x}^2(\mathbb{R}^4)}+\lVert |\nabla|^{3/2}u_t(t_0)\rVert_{L_{x}^2(\mathbb{R}^4)}.\label{2.2}
\end{align}

To see this claim, fix $t_0\leq t_1$ so that $[t_0,t_1]\subset I_{\textrm{max}}$, and suppose that (\ref{5.1}) is satisfied.  Let $t\in [t_0,t_1]$ be given.  Then, by the Strichartz estimate of Corollary \ref{c1}, we have
\begin{align}
Z_{t_0}(t_1)\lesssim Z_{t_0}(t_0)+\| |x'|^{-1/4} \nabla_{x',y}^{2} (u^{3} \log(3 + u^{2})) \|_{L_{t,x',y}^{4/3}}.\label{1.35}
\end{align}

By the product rule, 
\begin{align}
\nonumber &\nabla_{x',y}^{2} (u^{3} \log(3 + u^{2}))\\
\nonumber &\hspace{0.2in}= 3u^{2} \log(3 + u^{2}) \nabla_{x',y}^{2} u + 6u \log(3 + u^{2}) |\nabla_{x',y} u|^{2} \\
&\hspace{0.4in}+ \frac{14 u^{3}}{(3 + u^{2})} |\nabla_{x',y} u|^{2} + \frac{2u^{4}}{(3 + u^{2})} \nabla_{x',y}^{2} u - \frac{4 u^{5}}{(3 + u^{2})^{2}} |\nabla_{x',y} u|^{2},\label{1.36}
\end{align}
and the right-hand side of ($\ref{1.35}$) is bounded by a multiple of
\begin{align*}
&Z_{t_0}+\lVert |x'|^{-1/4}u^2\log(3+u^2)\nabla_{x',y}^2u\rVert_{L_{t,x',y}^{4/3}}+\lVert |x'|^{-1/4}u\log(3+u^2)|\nabla_{x',y}u|^2\rVert_{L_{t,x',y}^{4/3}}\\
&\hspace{0.2in}+\left\lVert \frac{u^3}{|x'|^{1/4}(3+u^2)}|\nabla_{x',y}u|^2\right\rVert_{L_{t,x',y}^{4/3}}+\left\lVert \frac{u^4}{|x'|^{1/4}(3+u^2)}\nabla_{x',y}^2u\right\rVert_{L_{t,x',y}^{4/3}}\\
&\hspace{0.2in}+\left\lVert \frac{u^5}{|x'|^{1/4}(3+u^2)^2}|\nabla_{x',y}u|^2\right\rVert_{L_{t,x',y}^{4/3}}.
\end{align*}

Now, by Sobolev embedding,
\begin{align}
\nonumber &\| |x'|^{-1/4} u^{2} \log(3 + u^{2}) \nabla_{x',y}^{2} u \|_{L_{t,x',y}^{4/3}} + \left\| |x'|^{-1/4} \frac{u^{4}}{(3 + u^{2})} \nabla_{x',y}^{2} u \right\|_{L_{t,x',y}^{4/3}} \\
\nonumber &\hspace{0.2in}\lesssim \left\| \frac{u^{2} \log(3 + u^{2})^{1/2}}{|x'|^{1/2}} \right\|_{L_{t,x',y}^{2}} \| |x'|^{1/4} \nabla_{x',y}^{2} u \|_{L_{t,x',y}^{4}} \| \log(3 + u^{2})^{1/2} \|_{L_{t,x',y}^{\infty}} \\
\nonumber &\hspace{0.2in}\lesssim Z_{t_0}(t_1)\log(3 + Z_{t_0}(t_1) + E[u_0,u_1])^{1/2} \left\| \frac{u^{2} \log(3 + u^{2})^{1/2}}{|x'|^{1/2}} \right\|_{L_{t,x',y}^{2}(I \times \mathbb{R}^{4})}.
\end{align}

Next, we estimate
\begin{align}
\nonumber &\left\| |x'|^{-1/4} \frac{u^{3}}{3 + u^{2}} |\nabla_{x',y} u|^{2} \right\|_{L_{t,x',y}^{4/3}} + \left\| |x'|^{-1/4} \frac{u^{5}}{(3 + u^{2})^{2}} |\nabla_{x',y} u|^{2} \right\|_{L_{t,x',y}^{4/3}}\\
&\hspace{0.2in} \lesssim \left\| \frac{u}{|x'|^{1/4}} \right\|_{L_{t,x',y}^{4}} \| \nabla_{x',y} u \|_{L_{t,x',y}^{4}}^{2}.\label{1.38}
\end{align}
Let $1 \leq j \leq 4$ be a fixed index. Integrating by parts,
\begin{align}
\nonumber\iiint (\partial_{j} u)^{4} dx' dy dt&= -3 \iiint u (\partial_{j}^{2} u) (\partial_{j} u)^{2} dx' dy dt \\
\nonumber &\lesssim \| \nabla_{x',y} u \|_{L_{t,x',y}^{4}}^{2} \| |x'|^{1/4} \nabla_{x',y}^{2} u \|_{L_{t,x',y}^{4}} \| |x'|^{-1/4} u \|_{L_{t,x',y}^{4}}.
\end{align}
Summing over $1 \leq j \leq 4$,
\begin{align}
\nonumber \| \nabla_{x',y} u \|_{L_{t,x',y}^{4}}^{2}&\lesssim \| |x'|^{1/4} \nabla_{x',y}^{2} u \|_{L_{t,x',y}^{4}} \| |x'|^{-1/4} u \|_{L_{t,x',y}^{4}}\\
&\lesssim \| |x'|^{1/4} \nabla_{x',y}^{2} u \|_{L_{t,x',y}^{4}} \left\| \frac{u \log(3 + u^{2})^{1/4}}{|x'|^{1/4}} \right\|_{L_{t,x',y}^{4}}.\label{1.40}
\end{align}
and we therefore obtain
\begin{align}
\nonumber (\ref{1.38})&\lesssim \left\| \frac{u^{2} \log(3 + u^{2})^{1/2}}{|x'|^{1/2}} \right\|_{L_{t,x',y}^{2}} \| |x'|^{1/4} \nabla_{x',y}^{2} u \|_{L_{t,x',y}^{4}} \| \log(3 + u^{2})^{1/2} \|_{L_{t,x',y}^{\infty}} \\
\nonumber &\lesssim Z_{t_0}(t_1) \log(3 + Z_{t_0}(t_1) + E[u_0,u_1])^{1/2} \left\| \frac{u^{2} \log(3 + u^{2})^{1/2}}{|x'|^{1/2}} \right\|_{L_{t,x',y}^{2}(I \times \mathbb{R}^{4})}.
\end{align}

Finally, integrating by parts and using $\log(3 + u^{2})\gtrsim 1$,
\begin{align}
\nonumber &\iiint (\partial_{j} u)^{4} \log(3 + u^{2})^{1/2} dx' dy dt\\
\nonumber &\hspace{0.2in}= -3 \iiint (\partial_{j}^{2} u) (\partial_{j} u)^{2} u \log(3 + u^{2})^{1/2} dx' dy dt \\
\nonumber &\hspace{0.4in}- \iiint (\partial_{j} u)^{4} \frac{u^{2}}{(3 + u^{2})} \frac{1}{\log(3 + u^{2})^{1/2}} dx' dy dt \\
\nonumber &\hspace{0.2in}\lesssim \| \nabla_{x',y} u \|_{L_{t,x',y}^{4}}^{4} + \bigg(\| \log(3 + u^{2})^{1/8} \nabla_{x',y} u \|_{L_{t,x',y}^{4}}^{2} \\
\nonumber &\hspace{1.8in} \cdot \| |x'|^{1/4} \nabla_{x',y}^{2} u \|_{L_{t,x',y}^{4}} \left\| \frac{\log(3 + u^{2})^{1/4} u}{|x'|^{1/4}} \right\|_{L_{t,x',y}^{4}}\bigg),
\end{align}
so that, by $(\ref{1.40})$,
\begin{align}
\nonumber \| |\nabla_{x',y} u| \log(3 + u^{2})^{1/8} \|_{L_{t,x',y}^{4}}^{2} \lesssim \| |x'|^{1/4} \nabla_{x',y}^{2} u \|_{L_{t,x',y}^{4}} \left\| \frac{\log(3 + u^{2})^{1/4} u}{|x'|^{1/4}} \right\|_{L_{t,x',y}^{4}}.
\end{align}

Collecting the above estimates, we have shown
\begin{align*}
Z_{t_0}(t_1)&\lesssim Z_{t_0} + \bigg(\| |x'|^{-1/2} \log(3 + u^{2})^{1/2} u^{2} \|_{L_{t,x',y}^{2}(J \times \mathbb{R}^{4})}\\
&\hspace{1.4in}\cdot Z_{t_0}(t_1) \log(3 + Z_{t_0}(t_1) + E[u_0,u_1])^{1/2}\bigg),
\end{align*}
so that, for $\epsilon>0$ sufficiently small (independent of $t_0$ and $t_1$), ($\ref{5.1}$) implies, via a continuity argument,
\begin{align*}
Z_{t_0}(t_1) \lesssim Z_{t_0},
\end{align*}
which completes the proof of the claim.

Now, let $\epsilon$ and $C$ be as in the claim, and note that the Morawetz estimate of Proposition $\ref{p.morawetz}$ implies
\begin{align*}
\iint_{I\times\mathbb{R}^4} \frac{|u(t,x)|^4\log(3+u(t,x)^2)}{|x'|}dxdt\leq C'E[u_0,u_1]<\infty.
\end{align*}

Following \cite{Tao}, we partition the interval $I$ into finitely many consecutive intervals $J_k$, $k=1,\cdots,K$, with each $J_k$ of the form $J_k=[t_k,t_{k+1}]$, where $$\inf I=t_1<t_2<\cdots<t_{K+1}=\sup I.$$  Setting $t_1=\inf I$ and $Z=Z_{t_1}$, and noting that
\begin{align*}
\sum_{k=1}^K \frac{\epsilon}{\log(3+E[u_0,u_1]+C^{2k}Z^2)}\gtrsim \log(3+N/\log(3+Z^2)),
\end{align*}
it follows that one can form a partition $(J_k)$ with $K\lesssim (3+Z^2)^{C'E[u_0,u_1]}$ such that
\begin{align}
\int_{J_{k}\times\mathbb{R}^4} \frac{|u(t,x)|^4\log(3+u(t,x)^2)}{|x'|}dxdt \leq \frac{\epsilon}{\log(3+E[u_0,u_1]+C^{2k}Z^2)}\label{6.1}
\end{align}
for all $1\leq k\leq K$.

Setting $Z(J_k)=\sup_{t\in [t_k,t_{k+1}]} Z_{t_k}(t)$ for each $k$, we now inductively show the bound $Z(J_k)\leq C^kZ$.  Indeed, for $k=1$ this is a straightforward application of the claim we established above, with ($\ref{5.1}$) verified via the $k=1$ case of ($\ref{6.1}$).  Suppose now that $k\geq 1$ and that we have $Z(J_k)\leq C^kZ$.  This implies $$\frac{\epsilon}{\log(3+E[u_0,u_1]+C^{2k}Z^2)}\leq \frac{\epsilon}{\log(3+E[u_0,u_1]+Z(J_k)^2)},$$ so that, in view of ($\ref{6.1}$), an application of the claim for the interval $I=J_{k+1}$ gives $$Z(J_{k+1})\leq CZ_{t_k}\leq CZ(J_k)\leq C^{k+1}Z,$$ which is the desired inductive bound.

Assembling the estimates for $Z(J_k)$, we obtain
\begin{align*}
\|\, |x'|^{1/4}D^{2} u\, \|_{L_{t,x}^{4}(I \times \mathbb{R}^{4})} \leq C(E[u_0,u_1],\| u_{0} \|_{\dot{H}^{5/2}} + \| u_{1} \|_{\dot{H}^{3/2}}),
\end{align*}
which implies the desired global well-posedness and scattering result. 
\end{proof}

\section{Proof of Theorem \ref{t.2}}

In this section, we prove Theorem $\ref{t.2}$, which is the global well-posedness result for solutions $u$ to (NLW) with
log-supercritical nonlinearity ($\ref{3.1}$) under the symmetry condition ($\ref{symm2}$), i.e.
\begin{align*}
u(t, x_{1}, x_{2}, x_{3}, x_{4}) = u(t, x_{1}^{2} + x_{2}^{2}, x_{3}^{2} + x_{4}^{2}).
\end{align*}

The relevant Strichartz estimate in this setting is given by Lemma $\ref{l2.1}$.

\begin{proof}[Proof of Theorem \ref{t.2}]
This time the standard Morawetz estimate implies that if $J$ is an interval on which (NLW) is well-posed, then
\begin{align*}
\int_{J} \int \frac{1}{|x'| + |y|} u(t,x',y)^{4} \log(3 + u^{2}(t,x',y)) dx' dy dt \lesssim E[u_0,u_1].
\end{align*}

Now, proceeding as before, for any $I=[t_0,t_1] \subset I_{\textrm{max}}$ the Strichartz estimate of Lemma $\ref{l2.1}$ and the above Morawetz estimate lead to
\begin{align*}
Z_{t_0}(t_1) \lesssim Z_{t_0} + \| ||x'| + |y||^{-1/2} \log(3 + u^{2})^{1/2} u^{2} \|_{L_{t,x',y}^{2}(I \times \mathbb{R}^{4})} \\ \times \| (|x'| + |y|)^{1/4} D^{2} u \|_{L_{t,x',y}^{4}(I \times \mathbb{R}^{4})} \log(3 + \| u \|_{L_{t,x}^{\infty}(I \times \mathbb{R}^{4})}^{2})^{1/2}.
\end{align*}

Making the same argument as before implies the global well-posedness and scattering result.
\end{proof}

\appendix

\section{Quantitative Strichartz norm estimates for the energy-critical (NLW)}

In this appendix, we apply a variant of the method used to prove Theorem \ref{t.1} to analysis of the {\it energy-critical} NLW.
In particular, in Proposition \ref{p.appendix} below we obtain a partial-symmetry analog of the radial three-dimensional 
result in \cite{GSV}, where the Strichartz norm is controlled by a quantity which is polynomial in the energy (c.f. Theorem 
\ref{t.1} and Theorem \ref{t.2}, where the possibility of exponential growth comes from the slightly energy-supercritical 
nonlinearity).

As a preliminary tool for this analysis, we establish a weighted $L_x^5$ estimate for functions satisfying the symmetry 
condition (\ref{symm1}), i.e. $$u_i(x)=\tilde{u}_i(|x'|,x_4), \quad i=0,1,$$
with $$x=(x',x_4)\in\mathbb{R}^3\times\mathbb{R}.$$ 
For a related bound used in the study of explicit constructions of blow-up solutions for the three-dimensional nonlinear Schr\"odinger equation, see \cite{HolmerRoudenko}.

\begin{lemma}
\label{l.wb}
There exists $C>0$ such that
\begin{align*}
\lVert |x'|^{1/5}u(x',x_4)\rVert_{L_x^5(\mathbb{R}^4)}\lesssim \lVert \nabla u\rVert_{L_x^2(\mathbb{R}^4)}
\end{align*}
holds for all $u\in\mathcal{S}(\mathbb{R}^3\times\mathbb{R})$ satisfying the symmetry condition (\ref{symm1}).
\end{lemma}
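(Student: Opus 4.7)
The weight $|x'|^{1/5}$ is sharp in the sense that $\|\,|x'|^{1/5} u\|_{L^5(\mathbb{R}^4)}$ and $\|\nabla u\|_{L^2(\mathbb{R}^4)}$ both scale like $\lambda^{-1}$ under $u(x)\mapsto u(\lambda x)$, so the plan is to prove a scale-invariant inequality using only the derivative norm. I plan to combine the pointwise Strauss-type radial Sobolev bound in the three-dimensional variable $x'$ with a one-dimensional Gagliardo--Nirenberg interpolation in the $x_4$ variable.

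The first step is to record the pointwise radial Sobolev inequality
\begin{align*}
|x'|^{1/2}\,|u(x',x_4)|\lesssim \lVert \nabla_{x'}u(\cdot,x_4)\rVert_{L^2_{x'}(\mathbb{R}^3)},
\end{align*}
which follows from writing $u(x',x_4)=v(|x'|,x_4)$, using $v(r,x_4)=-\int_r^{\infty}\partial_\rho v(\rho,x_4)\,d\rho$, and applying Cauchy--Schwarz with weight $\rho^2\,d\rho$. The second step is a one-dimensional Gagliardo--Nirenberg inequality applied in the $x_4$ slice (for each fixed $x'$):
\begin{align*}
\int_{\mathbb{R}}|u(x',x_4)|^{5}\,dx_4\lesssim \lVert u(x',\cdot)\rVert_{L^2_{x_4}}^{7/2}\lVert \partial_{x_4}u(x',\cdot)\rVert_{L^2_{x_4}}^{3/2}.
\end{align*}
Setting $P(x'):=\lVert u(x',\cdot)\rVert_{L^2_{x_4}}$ and $Q(x'):=\lVert \partial_{x_4}u(x',\cdot)\rVert_{L^2_{x_4}}$---both radial in $x'\in\mathbb{R}^3$ by the symmetry hypothesis---this reduces the proof to the weighted bound
\begin{align*}
\int_{\mathbb{R}^3}|x'|\,P(x')^{7/2}Q(x')^{3/2}\,dx'\lesssim \lVert \nabla u\rVert_{L^2(\mathbb{R}^4)}^{5}.
\end{align*}

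The third step is to apply the radial Sobolev bound to $P$ itself. Since $P$ is radial on $\mathbb{R}^3$, Strauss gives $|x'|^{1/2}P(x')\lesssim \lVert \nabla_{x'}P\rVert_{L^2(\mathbb{R}^3)}$; differentiating $P^2=\int u^{2}\,dx_4$ and using Cauchy--Schwarz yields $\lVert \nabla_{x'}P\rVert_{L^2(\mathbb{R}^3)}\leq \lVert \nabla_{x'}u\rVert_{L^2(\mathbb{R}^4)}\leq \lVert \nabla u\rVert_{L^2(\mathbb{R}^4)}$. Consequently, $|x'|^{7/4}P(x')^{7/2}\lesssim \lVert \nabla u\rVert_{L^2}^{7/2}$ pointwise, reducing the problem to
\begin{align*}
\int_{\mathbb{R}^3}|x'|^{-3/4}Q(x')^{3/2}\,dx'\lesssim \lVert \nabla u\rVert_{L^2(\mathbb{R}^4)}^{3/2}.
\end{align*}

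The main obstacle is precisely this last estimate. The natural $L^2$-control $\lVert Q\rVert_{L^2(\mathbb{R}^3)}^{2}=\lVert \partial_{x_4}u\rVert_{L^2(\mathbb{R}^4)}^{2}\leq \lVert \nabla u\rVert_{L^2}^{2}$ is scale-invariant with the target, but Hölder's inequality applied crudely fails because $|x'|^{-3/4}$ is not locally integrable in any dual $L^p(\mathbb{R}^3)$ space; so the bound cannot be a naive Hölder consequence of $Q\in L^2$ alone. I expect to resolve this by a dyadic decomposition in the radial variable, trading the local non-integrability against the additional structural information on $Q$ that is furnished by its definition in terms of $u$---specifically, using the radial symmetry of $Q$ together with a Hardy-type inequality and a careful interpolation at each dyadic annulus. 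Should this route prove inefficient, an alternative is to work directly with the axisymmetric lift $\tilde u(y,x_4):=v(|y|,x_4)$ on $\mathbb{R}^{5}$, for which $\lVert\,|x'|^{1/5}u\rVert_{L^5(\mathbb{R}^4)}^{5}$ is, up to a constant, the unweighted $L^5(\mathbb{R}^5)$ norm of $\tilde u$; one then combines the Strauss bound on $\mathbb{R}^4$ (for the $y$-variable) with 1D Gagliardo--Nirenberg in $x_4$ to obtain the required estimate.
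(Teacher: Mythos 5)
Your steps 1--3 are correct and reduce the lemma to the weighted bound
$\int_{\mathbb{R}^3}|x'|\,P(x')^{7/2}Q(x')^{3/2}\,dx'\lesssim\lVert\nabla u\rVert_{L^2}^5$, but step 4 contains the gap. By applying Strauss to the full power $P^{7/2}$ you commit exponent $7/4$ of the weight to the Strauss bound, leaving $|x'|^{-3/4}$ against $Q^{3/2}$. As you correctly observe, this remaining estimate cannot be a Hölder consequence of $Q\in L^2$; in fact it is \emph{false} for general radial $Q\in L^2(\mathbb{R}^3)$ (take $Q=|x'|^{-3/2}\mathbf{1}_{1\le|x'|\le N}$: the left side is $\sim\log N$ while $\lVert Q\rVert_{L^2}^{3/2}\sim(\log N)^{3/4}$). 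Nor does the additional structure of $Q$ rescue you at this level of regularity: applying Strauss to $Q$ requires $\lVert\nabla_{x'}Q\rVert_{L^2}$, which by your own Cauchy--Schwarz computation is controlled by $\lVert\nabla_{x'}\partial_{x_4}u\rVert_{L^2}$---a second derivative you do not have. The dyadic-decomposition route therefore cannot succeed. The $\mathbb{R}^5$-lift fallback also has a problem: the lift replaces $r^2\,dr$ by $r^3\,dr$ in \emph{both} the $L^5$ norm and the gradient norm, so $\lVert\nabla\tilde u\rVert_{L^2(\mathbb{R}^5)}$ is not comparable to $\lVert\nabla u\rVert_{L^2(\mathbb{R}^4)}$.

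The repair is to spend less of the Strauss bound on the weight: write $|x'|P^{7/2}Q^{3/2}=(|x'|^{1/2}P)^2\,P^{3/2}Q^{3/2}$ and bound the first factor by $\lVert\nabla u\rVert_{L^2}^2$. Then Hölder with exponents $(4,4/3)$ gives $\int P^{3/2}Q^{3/2}\,dx'\le\lVert P\rVert_{L^6(\mathbb{R}^3)}^{3/2}\lVert Q\rVert_{L^2(\mathbb{R}^3)}^{3/2}$, and the Sobolev embedding $\dot H^1(\mathbb{R}^3)\hookrightarrow L^6(\mathbb{R}^3)$ applied to $P$, together with your bound $\lVert\nabla_{x'}P\rVert_{L^2}\le\lVert\nabla u\rVert_{L^2}$, closes the argument with total power $\lVert\nabla u\rVert_{L^2}^5$. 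For comparison, the paper avoids introducing $P$ and $Q$ altogether: it multiplies the squared Strauss bound $|x'|\,|u(x',x_4)|^2\lesssim\int_{\mathbb{R}^3}|\nabla_{x'}u(z',x_4)|^2\,dz'$ (a function of $x_4$ only) by the fundamental-theorem-of-calculus bound $|u(x',x_4)|^3\lesssim\int_{\mathbb{R}}|u(x',s)|^2\,|\partial_{x_4}u(x',s)|\,ds$ (a function of $x'$ only). The $\mathbb{R}^4$-integral then \emph{factors} into a product of two integrals, and Hölder plus the Sobolev embedding $\dot H^1(\mathbb{R}^4)\hookrightarrow L^4(\mathbb{R}^4)$ finish without any interpolation-exponent bookkeeping.
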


\begin{proof}
Let $x=(x',x_4)\in\mathbb{R}^3\times\mathbb{R}$ be given.  By the usual radial Sobolev embedding (\ref{radSobolev}) applied to the map $x'\mapsto u(x',x_4)$, we have
\begin{align}
|x'|\, |u(x',x_4)|^2\lesssim \lVert \nabla_{x'}u(x',x_4)\rVert_{L_{x'}^2(\mathbb{R}^3)}^2.\label{4.1}
\end{align}
Combining this with the inequality
\begin{align*}
|u(x',x_4)|^3&\lesssim \int_{\mathbb{R}} |u(x',x_4)|^2|\partial_{x_4}u(x',x_4)|dx_4,\quad (x',x_4)\in \mathbb{R}^3\times\mathbb{R},
\end{align*}
we get
\begin{align*}
|x'|\, |u(x',x_4)|^5&\lesssim \bigg(\int_{\mathbb{R}^3} |\nabla_{x'}u(x',x_4)|^2dx'\bigg)\bigg(\int_{\mathbb{R}} |u(x',x_4)|^2|\partial_{x_4}u(x',x_4)|dx_4\bigg),
\end{align*}
for all $(x',x_4)\in\mathbb{R}^3\times\mathbb{R}$.

Integrating in $x'$ and $x_4$, we therefore obtain
\begin{align*}
\int_{\mathbb{R}}\int_{\mathbb{R}^3} |x'|\, |u(x',x_4)|^5dx'dx_4&\lesssim \lVert \nabla_{x'}u(x',x_4)\rVert_{L_x^2(\mathbb{R}^4)}^2\lVert |u|^2\partial_{x_4}u\rVert_{L_x^1(\mathbb{R}^4)}\\
&\lesssim \lVert \nabla u\rVert_{L_x^2(\mathbb{R}^4)}^3\lVert u\rVert_{L_x^4(\mathbb{R}^4)}^2\\
&\lesssim \lVert \nabla u\rVert_{L_x^2(\mathbb{R}^4)}^5,
\end{align*}
where we have used the Sobolev embedding to obtain the last inequality.
\end{proof}

We are now ready to state and prove the quantitative energy-critical result.

\begin{proposition}
\label{p.appendix}
Suppose that $u:I\times\mathbb{R}^4\rightarrow\mathbb{R}$ is a solution to (NLW) with nonlinearity $F:\mathbb{R}\rightarrow\mathbb{R}$ given by
\begin{align*}
F(u)=u^3.
\end{align*}

There exist constants $C_1,C_2>0$ such that if $u$ corresponds to initial data $(u,u_t)|_{t=0}=(u_0,u_1)\in \dot{H}^{1}(\mathbb{R}^4)\times L^{2}(\mathbb{R}^4)$, with $u_i$, $i=0,1$, satisfying the symmetry condition (\ref{symm1}), then
\begin{align*}
\lVert u(t,x)\rVert_{L_{t,x}^5(I\times \mathbb{R}^4)}\leq C_1E[u_0,u_1]^{C_2}.
\end{align*}
\end{proposition}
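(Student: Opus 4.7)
The plan is to follow the bootstrap/partition strategy from the proof of Theorem \ref{t.1} in Section \ref{s.5}, adapted to the energy-critical cubic nonlinearity, and to track H\"older exponents so that the final bound on $\|u\|_{L_{t,x}^5}$ is \emph{polynomial} (rather than exponential) in $E[u_0,u_1]$.

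I would use three main ingredients. First, the Morawetz estimate
\[
\iint_{I\times\mathbb{R}^4}\frac{u(t,x)^4}{|x'|}\,dx\,dt\;\lesssim\;E[u_0,u_1],
\]
whose derivation is a word-for-word adaptation of Proposition \ref{p.morawetz} with $F(u)=u^3$ in place of the log-supercritical nonlinearity. Second, the pointwise-in-time bound $\||x'|^{1/5}u(t)\|_{L_x^5}\lesssim E^{1/2}$ from Lemma \ref{l.wb}. Third, the weighted Strichartz of Lemma \ref{l1.3} applied to $|\nabla|^{1/2}u$, which at the energy level reads
\[
Z(J)\;:=\;\bigl\|\,|x'|^{1/4}|\nabla|^{1/2}u\,\bigr\|_{L_{t,x}^4(J)}\;\lesssim\;E^{1/2}\;+\;\bigl\|\,|x'|^{-1/4}|\nabla|^{1/2}(u^3)\,\bigr\|_{L_{t,x}^{4/3}(J)}
\]
on any subinterval $J\subset I$.

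On a short subinterval $J$, I would estimate the nonlinear term by fractional Leibniz together with the factorization $|x'|^{-1/4}=|x'|^{-1/2}\cdot|x'|^{1/4}$:
\[
\bigl\|\,|x'|^{-1/4}u^{2}|\nabla|^{1/2}u\,\bigr\|_{L_{t,x}^{4/3}(J)}\;\lesssim\;\bigl\|\,|x'|^{-1/2}u^{2}\,\bigr\|_{L_{t,x}^{2}(J)}\cdot Z(J).
\]
The first factor on the right is exactly the square root of the Morawetz quantity restricted to $J$. Setting the Morawetz mass of $J$ to be below a small absolute constant $\epsilon$ and running a continuity argument gives $Z(J)\lesssim E^{1/2}$ on such a piece. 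Since the total Morawetz mass is $\lesssim E$, the number of subintervals needed is $K\lesssim E/\epsilon\lesssim E$, which is polynomial in $E$. Concatenating per-piece bounds yields the global bound $Z(I)\lesssim K^{1/4}E^{1/2}\lesssim E^{3/4}$.

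To convert this weighted Strichartz bound into the desired estimate $\|u\|_{L_{t,x}^5(I)}\lesssim E^{C_2}$, I would combine $Z(I)$ with the Sobolev bound $\|u\|_{L_t^\infty L_x^4}\lesssim E^{1/2}$ and the weighted $L^5_x$ bound of Lemma \ref{l.wb}, via a H\"older interpolation chosen so that the various $|x'|$ weights cancel exactly. The main obstacle I expect is arranging this last interpolation: for the final $L_{t,x}^5$ bound to be genuinely polynomial in $E$, the H\"older distribution must balance the positive $|x'|$-weight from Lemma \ref{l.wb} against the negative $|x'|$-weight implicit in the Morawetz/weighted-Strichartz pair, leaving no residual weight and no logarithmic factor. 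This exact cancellation is the place where the scale-invariance of $F(u)=u^3$ (as opposed to the logarithmic correction in (\ref{3.1})) is crucial, and it is the step most sensitive to the precise exponents in Lemma \ref{l.wb} and Corollary \ref{c1}.
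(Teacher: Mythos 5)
Your proposal diverges from the paper at a structural level, and the divergence creates a real gap in the final step.

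The paper does not use the weighted Strichartz estimates (Lemma \ref{l1.3}/Corollary \ref{c1}) in the appendix at all. Instead, it first derives an \emph{a priori} bound
\[
\|u\|_{L_t^9L_x^{9/2}} \lesssim \||x'|^{1/5}u\|_{L_t^\infty L_x^5}^{5/9}\,\bigl\|\,|x'|^{-1/4}u\,\bigr\|_{L_{t,x}^4}^{4/9} \lesssim E^{7/18},
\]
by pointwise H\"older, combining Lemma \ref{l.wb} with the Morawetz norm $\||x'|^{-1/4}u\|_{L_{t,x}^4}^4 = \iint u^4/|x'|$. Note the positive weight $|x'|^{1/5}$ from Lemma \ref{l.wb} is cancelled by the \emph{negative} weight in the Morawetz quantity. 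The bootstrap quantity is then the ordinary unweighted Strichartz norm $Z=\|u\|_{L_t^2L_x^8}+\|u\|_{L_{t,x}^5}$, which contains $\|u\|_{L_{t,x}^5}$ directly, and the forcing term is handled by interpolating $\|u\|_{L_t^4L_x^{16/3}}$ between $\|u\|_{L_t^9L_x^{9/2}}$ and $Z$; smallness is taken in the $L_t^9L_x^{9/2}$ norm.

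The genuine problem with your plan is the conversion step you flag at the end. Your bootstrap controls $Z(I)=\||x'|^{1/4}|\nabla|^{1/2}u\|_{L_{t,x}^4}$, and you propose to extract $\|u\|_{L_{t,x}^5}$ via H\"older against $\|u\|_{L_t^\infty L_x^4}$ and $\||x'|^{1/5}u\|_{L_t^\infty L_x^5}$. But all three of these quantities carry non-negative powers of $|x'|$, so any H\"older product of them retains a non-negative residual $|x'|$-weight; it cannot produce the unweighted $L_{t,x}^5$ norm. You would have to also bring in the negatively-weighted Morawetz norm $\||x'|^{-1/4}u\|_{L_{t,x}^4}$, and even then you would still have to dispose of the half derivative $|\nabla|^{1/2}$ sitting inside $Z(I)$, which H\"older alone does not do. A secondary issue is the weighted fractional Leibniz rule you invoke to bound $\||x'|^{-1/4}|\nabla|^{1/2}(u^3)\|_{L^{4/3}}$: this requires a weighted paraproduct/commutator estimate that the paper deliberately avoids (in the main theorems it applies $D^2$ so that only integer-order Leibniz is needed, and in the appendix it avoids weighted Strichartz entirely). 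Your overall architecture (Morawetz smallness $\Rightarrow$ piecewise bounds $\Rightarrow$ $K\lesssim E^{O(1)}$ pieces) is the right philosophy and matches the paper in spirit, but to close the argument you should make $\|u\|_{L_{t,x}^5}$ itself part of the bootstrap norm, exactly as the paper does, rather than trying to recover it after the fact from a weighted, half-differentiated quantity.
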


\begin{proof}
For any solution of (NLW) with energy-critical nonlinearity $F(u)=u^3$ which satisfies the symmetry condition (\ref{symm1}), the argument used to prove Proposition \ref{p.morawetz} shows the Morawetz-type bound
\begin{align*}
\iint_{I\times\mathbb{R}^4} \frac{u(t,x)^4}{|x'|}dtdx\lesssim E[u_0,u_1].
\end{align*}

Moreover, as a consequence of Lemma \ref{l.wb}, one has
\begin{align*}
\lVert u\rVert_{L_t^9L_x^{9/2}}&\lesssim \lVert |x'|^{1/9}u^{5/9}\rVert_{L_t^\infty L_x^9}\lVert \frac{u^{4/9}}{|x'|^{1/9}}\rVert_{L_{t,x}^9}\\
&=\lVert |x'|^{1/5}u\rVert_{L_t^\infty L_x^5}^{5/9}\lVert \frac{u}{|x'|^{1/4}}\rVert_{L_{t,x}^4}^{4/9}\\
&\lesssim E[u_0,u_1]^{7/18}
\end{align*}

For $t_0\leq t_1$, defining $Z_{t_0}(t_1)$ by
\begin{align*}
Z_{t_0}(t_1):=\lVert u\rVert_{L_t^2([t_0,t_1];L_x^8(\mathbb{R}^4))}+\lVert u\rVert_{L_t^5([t_0,t_1];L_x^5(\mathbb{R}^4))},
\end{align*}
the usual Strichartz estimates on $\mathbb{R}^4$ give
\begin{align*}
Z_{t_0}(t_1)&\lesssim E[u_0,u_1]^{1/2}+\lVert u^2\nabla u\rVert_{L_{t}^2([t_0,t_1];L_x^{8/7}(\mathbb{R}^4))}\\
&\lesssim E[u_0,u_1]^{1/2}+\lVert u\rVert_{L_t^{4}L_x^{16/3}}^2\lVert \nabla u\rVert_{L_t^\infty L_x^2}\\
&\lesssim E[u_0,u_1]^{1/2}+\Big[\lVert u\rVert_{L_t^{9}L_x^{9/2}}^{1/2}\lVert u\rVert_{L_t^2L_x^8}^{17/54}\lVert u\rVert_{L_{t,x}^5}^{5/27}\Big]^2E[u_0,u_1]^{1/2}\\
&\lesssim E[u_0,u_1]^{1/2}+\Big(\lVert u\rVert_{L_t^9L_x^{9/2}}E[u_0,u_1]^{1/2}\Big)Z_{t_0}(t_1)
\end{align*}
As a consequence, there exist constants $C>0$ and $\epsilon>0$ such that for all $t_0\leq t_1$ the condition
\begin{align}
\lVert u\rVert_{L_t^9([t_0,t_1];L_x^{9/2}(\mathbb{R}^4))}\leq \frac{\epsilon}{E[u_0,u_1]^{1/2}}\label{3.2}
\end{align}
implies
\begin{align*}
Z_{t_0}(t_1)\leq CE[u_0,u_1]^{1/2}.
\end{align*}

Now, invoking an iterative argument based on partitioning the interval $I$ into $K\lesssim E[u_0,u_1]^8$ intervals $[t_k,t_{k+1}]$, $k=1,\cdots,K$, on which (\ref{3.2}) is satisfied, we obtain
\begin{align*}
\lVert u\rVert_{L_{t}^5([t_k,t_{k+1}];L_x^5(\mathbb{R}^4))}\leq Z_{t_k}(t_{k+1})\lesssim E[u_0,u_1]^{1/2}
\end{align*}
for each $1\leq k\leq K$, and thus
\begin{align*}
\lVert u\rVert_{L_t^5(I;L_x^5(\mathbb{R}^4))}&\lesssim E[u_0,u_1]^{21/10}
\end{align*}
as desired.
\end{proof}

\end{document}